\declaretheoremstyle[bodyfont=,spaceabove=\medskipamount,
    spacebelow=\medskipamount]{definition}
\theoremstyle{definition}
\newtheorem{theorem}{Theorem}[section]
\newtheorem{lemma}[theorem]{Lemma}
\newtheorem{corollary}[theorem]{Corollary}
\newtheorem{proposition}[theorem]{Proposition}
\newtheorem{definition}[theorem]{Definition}
\newtheorem{conjecture}[theorem]{Conjecture}
\newtheorem{remark}[theorem]{Remark}
\newtheorem{example}[theorem]{Example}
\title{\large \textbf{THE MOCK ALEXANDER POLYNOMIAL FOR KNOTOIDS AND LINKOIDS}}
\author{\normalsize JOANNA A.~ELLIS-MONAGHAN, NESLIHAN G\"{U}G\"{U}MC\"{U}, LOUIS H.~KAUFFMAN, AND WOUT MOLTMAKER}
\date{}
\begin{document}

\maketitle

\begin{abstract}
The mock Alexander polynomial is an extension of the classical Alexander polynomial, defined and studied for (virtual) knots and knotoids by the second and third authors. In this paper we consider the mock Alexander polynomial for generalizations of knotoids. We prove a conjecture on the mock Alexander polynomial for knotoids, which generalizes to uni-linkoids. Afterwards we give constructions for canonical invariants of linkoids derived from the mock Alexander polynomial, using the formalism of generalized knotoids due to Adams et al.
\end{abstract}



\section{Context and Motivation}

We develop and then determine properties of a \emph{mock Alexander polynomial} which generalizes the Formal Knot Theory for the Alexander-Conway polynomial from \cite{kauffman1983formal} to knotoids and linkoids. This paper and its similarly focused predecessor \cite{MAP} thus have their roots in the Alexander polynomial for knots and links. 

When J. W. Alexander wrote his paper on that eponymous polynomial in 1928 \cite{Alex}, he understood that he was expressing a knot invariant that was related to the abelianization of the commutator subgroup of the knot group, that is, the fundamental group of the 
complement of the knot or link in three space,  viewed as a module over a Laurent polynomial ring ${ \mathbb Z}[t, t^{-1}]$.  Early last century, Alexander apparently did not have a satisfactory topological proof of the invariance of his polynomial. Such proofs came later with the concept of the infinite cyclic covering space of the knot complement \cite{Milnor}.  

Alexander made his definitions in terms of the combinatorial topology of knot diagrams and moves on the knot diagrams. Alexander and Briggs
\cite{AlexBriggs} had previously given three moves on knot diagrams, now known as the Reidemeister moves \cite{Reidemeister}, and they showed that two diagrams are equivalent by Reidemeister moves if and only if the embeddings in three dimensional space corresponding to the diagrams (via projection) are
ambient isotopic. Thus, Alexander and Briggs reformulated knot theory in terms of this move-based combinatorial topology of diagrams.

Accordingly, Alexander explains at the beginning of his paper \cite{Alex} how to associate a matrix $M_{K}(t)$ with entries in ${ \mathbb Z}[t, t^{-1}]$ so that the determinant of this matrix is the Alexander polynomial: $\Delta_{K}(t) \dot{=} \text{Det}(M_{K}(t)).$  The locution $\dot{=}$ indicates equality up to a factor of 
$\pm t^{n}$ for some integer $n.$ This expression of the polynomial up such a factor is related to the choices made in its definition. The reader can find an account of the structure of this matrix in the Alexander paper and also in the book \textit{Formal Knot Theory} \cite{kauffman1983formal}. In that book, Kauffman reformulated Alexander's definition as a state summation, by interpreting the terms of the determinant expansion graphically and locating an intrinsic graphical way to obtain the permutation signs. The state sum is then formulated both graphically and in terms of a permanent of an associated matrix. 

In \textit{Formal Knot Theory} \cite{kauffman1983formal} the state sum is reformulated to produce a model for the Alexander-Conway polynomial\cite{conway1970enumeration}, which is related to the Alexander polynomial by the formula
$\Delta_{K}(t) \dot{=} \nabla_{K}(z)$ where $z = \sqrt{t} - 1/\sqrt{t}.$  This invariant can be computed via a skein relation of the form
$\nabla_{K_{+}} - \nabla_{K_{-}} = z \nabla_{K_{0}}$.  Here $K_{+}$ denotes a diagram with a chosen positive crossing, $K_{-}$ denotes an identical diagram with this crossing switched to a negative crossing and $K_{0}$ denotes the same diagram with this crossing replaced by two non-crossing arcs (a {\it smoothing} of the 
crossing). For classical knot and link diagrams, the Alexander-Conway polynomial is determined by the normalization $\nabla_{O} = 1$ at the unknot $O,$ together with the requirement that if $K$ and $K'$ are related by Reidemeister moves, then $\nabla_{K} = \nabla_{K'}.$ One can use this state sum to prove many properties of the Conway-Alexander polynomial, and it is related to the categorification that occurs in Heegaard-Floer Link Homology \cite{baldwin2012combinatorial}.

The state sum description of the Alexander-Conway polynomial was extended to more general objects in the predecessor to this paper \cite{MAP}, where it was applied to define polynomial invariants, called \textit{mock Alexander polynomials}, of suitable diagrams for `starred' links and virtual links, as well as (starred) knotoids and linkoids. Knotoids were originally defined by V.~Turaev in \cite{turaev2012knotoids}, and have since received considerable interest. Knotoids have been studied in their own right as a natural generalization of links \cite{gugumcu2017new,barbensi2018double,gugumcu2021quantum,moltmaker2022framed}, and have also found application in modelling the topology of proteins \cite{dorier2018knoto,goundaroulis2020knotoids} and other systems of entangled filaments \cite{panagiotou2020knot,barkataki2022jones,barkataki2023virtual}. In this paper we study the mock Alexander polynomial of knotoids and linkoids to obtain new polynomial invariants of these objects.

The organization of the paper is as follows. In Section \ref{sec:prelim} we discuss the necessary preliminaries on knotoids, linkoids, and starred diagrams. All our invariants are then defined both as state summations and as permanents of certain matrices as described in Section \ref{sec:MAP}. In Section \ref{sec:MAP} we present a conjecture about the behaviour of these Mock Alexander polynomials from \cite{MAP}, and we prove the topological version of the conjecture using the skein relation in Section \ref{sec:knotoids}. Finally in Section \ref{sec:linkoids} we discuss the application of the state sum to linkoids, and describe a polynomial invariant of unstarred linkoids derived from the mock Alexander polynomial.

\section{Preliminaries}\label{sec:prelim}

\subsection{Knotoids and linkoids}

We begin with the definitions of knotoids and their generalizations, uni-linkoids and linkoids. Essentially, these are link diagrams that admit open-ended components whose endpoints may lie in any region of the diagram. The theory of knotoids was introduced in \cite{turaev2012knotoids}.

Let $I$ denote the unit interval, considered as an oriented manifold with boundary, and let $S^1$ denote the unit circle with an orientation. Let $\mathcal{L}_{\kappa,\ell} = \left( \bigsqcup_{j=1}^\kappa I \right) \sqcup \left( \bigsqcup_{i=1}^\ell S^1 \right)$ for some $\kappa,\ell \in \mathbb{Z}_{\geq 0}$.

\begin{definition}
Let $\Sigma$ be a surface. A \textit{$(\kappa,\ell)$-linkoid diagram} (or simply a linkoid diagram, if $\kappa$ and $\ell$ are clear or unspecified) in $\Sigma$ is an immersion $L$ of $\mathcal{L}_{\kappa,\ell}$, such that all the singularities of $L$ are transversal double points away from the boundary points of $\mathcal{L}_{\kappa,\ell}$. These double points are endowed with over/under-crossing information and called \textit{crossings} of $L$. 

The image 
of a unit interval under $L$ is called a \textit{knotoidal component} of $L$. The image of 
an oriented unit circle under $L$ is called a \textit{loop component} of $L$. The images of the boundary of $\mathcal{L}_{\ell,\kappa}$ are called the \textit{endpoints} of $L$. Specifically, the images of a copy of $0\in I$ and $1\in I$ are called the \textit{tail} and \textit{head} of the corresponding knotoidal component, respectively. Knotoidal components are oriented from tail to head.

A \textit{knotoid diagram} is a $(1,0)$-linkoid diagram. A \textit{uni-linkoid diagram} is a $(1,\ell)$-linkoid diagram with $\ell>0$.
\end{definition}

Throughout this paper we will restrict our attention to linkoid diagrams on $\Sigma=S^2$ and $\Sigma=\mathbb{R}^2$. There are called \textit{spherical} and \textit{planar} linkoid diagrams respectively. For examples of planar linkoids, see Figure \ref{fig:examples}. Note that braid- and tangle diagrams can be seen as (endpoint-labelled) spherical linkoid diagrams all of whose endpoints lie in a single region.

\begin{figure}[ht]
    \centering
    \includegraphics[width=.9\linewidth]{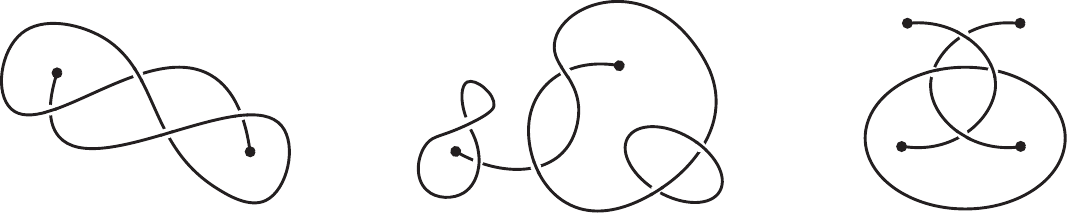}
    \caption{A knotoid diagram (left), uni-linkoid diagram (middle), and $(2,1)$-linkoid diagram (right).}
    \label{fig:examples}
\end{figure}

\begin{definition}
Two linkoid diagrams in $\Sigma$ are \textit{equivalent} if they can be related by isotopy of $\Sigma$ and a sequence of the familiar Reidemeister moves $R1$, $R2$, and $R3$. Note that the Reidemeister moves are local moves not involving the endpoints. As a result, the moves shown in Figure \ref{fig:forbiddenmoves} are not allowed. A \textit{linkoid} is an equivalence class of linkoid diagrams.
\end{definition}

\begin{figure}[ht]
    \centering
    \includegraphics[width=.7\linewidth]{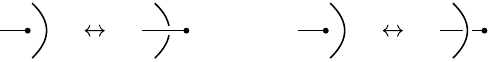}
    \caption{The `forbidden moves' for linkoid diagrams.}
    \label{fig:forbiddenmoves}
\end{figure}

\begin{remark}\label{rk:planar-spherical}
Unlike classical knots, the study of knotoids in $S^2$ differs from the study of knotoids in $\mathbb{R}^2$: Clearly the one-point compactification of $\mathbb{R}^2$ induces a surjection from planar to spherical knotoids, but this map turns out not to be injective. The `point at infinity' included in $S^2$ allows for more equivalence moves than planar isotopy, namely isotopies moving arcs across $\infty\in S^2$. Such moves are called \textit{spherical moves}, and clearly planar knotoids modulo spherical moves are equivalent to spherical knotoids. As a result there are pairs of knotoids that are inequivalent on $\mathbb{R}^2$ but become equivalent on $S^2$, the only difference between them being that their diagrams have $\infty \in S^2$ lying in different regions.
\end{remark}

\begin{definition}
Any knotoid in $S^2$ can be represented with a knotoid diagram in $\mathbb{R}^2$ whose tail lies in the exterior region of the plane. Such a diagram of a knotoid in $S^2$ is called its \textit{standard representation}.
\end{definition}

\begin{lemma}\cite{turaev2012knotoids} \label{lm:standard}
Two knotoid diagrams are equivalent to each other in $S^2$ if and only if their standard representations are equivalent in $\mathbb{R}^2$.
\end{lemma}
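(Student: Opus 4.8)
The plan is to prove the two implications separately, with essentially all of the work in the forward direction. For the backward implication, suppose the standard representations $D_1^{\mathrm{st}},D_2^{\mathrm{st}}$ of two knotoid diagrams $D_1,D_2$ are equivalent in $\mathbb{R}^2$. Every planar isotopy and every Reidemeister move in $\mathbb{R}^2$ is also a legal move in $S^2$ via the inclusion $\mathbb{R}^2\hookrightarrow S^2$, so $D_1^{\mathrm{st}}\sim_{S^2}D_2^{\mathrm{st}}$. On the other hand, by the very definition of a standard representation, $D_i$ and $D_i^{\mathrm{st}}$ represent the same knotoid in $S^2$, so $D_i\sim_{S^2}D_i^{\mathrm{st}}$. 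Concatenating gives $D_1\sim_{S^2}D_1^{\mathrm{st}}\sim_{S^2}D_2^{\mathrm{st}}\sim_{S^2}D_2$. The same concatenation reduces the forward implication to a single \emph{crux}: if $D_1,D_2$ are \emph{themselves} standard representations and $D_1\sim_{S^2}D_2$, then $D_1\sim_{\mathbb{R}^2}D_2$.

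The key observation for the crux is that a knotoid diagram has a canonically distinguished region, namely the \emph{tail region} $R_t$ whose closure contains the free endpoint at the tail; being standard means precisely that $\infty\in R_t$, i.e. the tail region is the exterior region. Since the tail endpoint is preserved by any equivalence, so is the tail region, and the strategy is to carry the point $\infty$ along inside the tail region throughout a spherical equivalence, thereby never letting an arc cross $\infty$ and so realizing every move planarly. Concretely, I would write the spherical equivalence as a finite chain $D_1=E_0,E_1,\dots,E_n=D_2$ of elementary moves and induct, placing $\infty$ in $R_t(E_i)$ to obtain a planar diagram $E_i^{\mathrm{st}}$; two such placements in the same region differ by dragging $\infty$ within $R_t(E_i)$, which is a planar isotopy, so $E_i^{\mathrm{st}}$ is well defined up to $\mathbb{R}^2$-equivalence. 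For an isotopy step $H_1(E_i)=E_{i+1}$ one has $H_1(R_t(E_i))=R_t(E_{i+1})$; taking $\infty\in R_t(E_{i+1})$ and $H_1^{-1}(\infty)\in R_t(E_i)$ as the two basepoints, $H_1$ becomes an orientation-preserving homeomorphism of pairs $(S^2,H_1^{-1}(\infty))\to(S^2,\infty)$ carrying $E_i\to E_{i+1}$, hence a planar isotopy after deleting the respective points. For a Reidemeister step supported in a disk $\Delta$ (which, by definition, avoids the endpoints), I would place $\infty$ in the part of the tail region near the tail and outside $\Delta$; then $E_i$ and $E_{i+1}$ agree outside $\Delta$, both have $\infty\in R_t$, and they differ by the same Reidemeister move now performed in the plane. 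Chaining the resulting equivalences $E_i^{\mathrm{st}}\sim_{\mathbb{R}^2}E_{i+1}^{\mathrm{st}}$ yields $D_1\sim_{\mathbb{R}^2}D_2$.

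The main obstacle is making the slogan ``keep $\infty$ in the tail region'' fully rigorous across the interleaving of isotopies and Reidemeister moves, and this is exactly where the phenomenon of Remark \ref{rk:planar-spherical} must be controlled: the extra spherical moves are precisely those that drag an arc across $\infty$, and I must confirm they are never forced when $\infty$ is tethered to the free endpoint. Two technical points underpin this. First, the connectivity of the tail region, together with the fact that $\mathrm{Homeo}^{+}(\mathbb{R}^2)$ is connected, is needed to upgrade an orientation-preserving homeomorphism carrying one diagram to another into an honest ambient isotopy of the plane; this justifies both the well-definedness of $E_i^{\mathrm{st}}$ and the isotopy step. Second, for the Reidemeister step one must verify that a small neighborhood of the tail, minus $\Delta$, really does lie in the tail region of \emph{both} $E_i$ and $E_{i+1}$ — this is where the restriction that Reidemeister moves avoid the endpoints is essential, since it guarantees the diagrams coincide near the tail and the local piece of the tail region is unaffected by the move.
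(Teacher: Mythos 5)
The paper does not prove this lemma; it is quoted verbatim from Turaev \cite{turaev2012knotoids}, so there is no in-paper argument to compare yours against. Your proof is correct and follows the standard route (essentially Turaev's): the backward direction is the trivial inclusion of planar moves into spherical ones, and the forward direction is handled by tethering $\infty$ to the distinguished tail region throughout the move sequence, using that Reidemeister disks avoid the endpoints and that connectivity of $\mathrm{Homeo}^{+}(\mathbb{R}^2)$ upgrades the resulting orientation-preserving homeomorphisms to planar ambient isotopies. The two points you flag as needing care (well-definedness of the placement of $\infty$ up to planar isotopy within the connected tail region, and persistence of the local tail region across a Reidemeister step) are exactly the right ones, and both go through as you describe.
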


Lemma \ref{lm:standard} allows us to identify a knotoid in $S^2$ with its standard representation, and work with it as a diagram in (a portion of) the plane.

\begin{definition}
The \textit{universe} $U_L$ of a linkoid $L$ is the planar graph that is obtained by replacing any crossings and endpoints of $L$ with vertices (of degrees 4 and 1, respectively) and the strands connecting the vertices with edges. A \textit{region} of a universe $U_L$ is a connected component of the complement of $U_L$. The unbounded region of a universe $U_L$ in $\mathbb{R}^2$ is called the \textit{exterior region} of $U_L$.
\end{definition}

\begin{definition}\label{def:split}
A linkoid diagram is called \textit{split} if its universe is a disconnected graph. Otherwise, it is called \textit{connected}.
\end{definition}

Throughout this paper we assume all linkoid diagrams are connected. Note that up to equivalence, this is without loss of generality, as any split diagram can be made connected by $R2$ moves. The invariants we define are only defined for connected diagrams, so to show their invariance we make use of the following lemma:

\begin{lemma}\cite{MAP}\label{lm:split}
Let $L$ be a split linkoid diagram. Let $C_1,C_2$ be two connected linkoid diagrams each obtained from $L$ by selecting a pair of edges in $U_L$ that lie on the same face of $U_L$, applying an $R2$ move with the corresponding strands in $L$, and repeating until the resulting diagram is connected. Then $C_1$ and $C_2$ are equivalent by a series of Reidemeister moves moving only through connected diagrams.
\end{lemma}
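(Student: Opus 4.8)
The plan is to view each connected diagram $C_i$ as the split diagram $L$ together with a collection of \emph{clasps}: the small bigons created by the $R2$ moves of the construction, each joining a pair of arcs lying in two (possibly equal) components of $L$. I would record this data as a graph $G(C_i)$ whose vertices are the connected components of $U_L$ and whose edges are the clasps, each edge joining the two components that contain the clasped arcs. Since $C_i$ is connected while $L$ is not, $G(C_i)$ is connected on the component set. Whenever $G(C_i)$ is not a spanning tree it has a non-bridge clasp (a self-loop or an edge on a cycle); such a clasp can first be shrunk to a clean bigon by a \emph{reverse finger move} (a sequence of $R2$ and $R3$ moves pulling it free of the rest of the diagram) and then deleted by an inverse $R2$ move, all through connected diagrams, because the remaining clasps still span the components. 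Iterating reduces each $C_i$, through connected diagrams, to a connection whose clasp graph is a spanning tree $T_i$; it therefore suffices to relate any two spanning-tree connections.

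The central tool is a \textbf{finger-move lemma}: in any connected linkoid diagram $D$, for any two edges $a,b$ of $U_D$ and any prescribed over/under information, one may introduce a clasp joining $a$ and $b$ with that crossing data by a sequence of $R2$ and $R3$ moves during which $D$ stays connected. This is the standard detour construction: push a finger off the arc $a$, drag it along an embedded path in $\Sigma$ to the arc $b$, passing it over or under each strand it meets (an $R2$ or $R3$ move at each passage) and routing it clear of all existing clasp-bigons, then finish with one $R2$ move producing the new bigon. Because the finger stays attached to $a$, connectivity is never lost, and because the path avoids the existing bigons, every previously created clasp remains a clean bigon available for later deletion by a short reverse finger move and an inverse $R2$. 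I expect the verification of this lemma to be the main technical obstacle: in particular, one must check that the finger can always be routed so as to realize exactly the prescribed arcs $a,b$ and crossing signs, and that deletions can always be arranged on clean bigons whose removal does not disconnect.

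With the lemma in hand I would finish by a spanning-tree exchange argument. Regarding the clasps occurring in $C_1$ and $C_2$ as the edges of a multigraph $G$ on the components, $T_1$ and $T_2$ are two spanning trees of $G$, so by the basis-exchange property there is a sequence $T_1 = S_0, S_1, \dots, S_k = T_2$ with each $S_{j+1} = S_j - e_j + f_j$, where $f_j$ is an edge of $T_2 \setminus S_j$ and $e_j$ is an edge of $S_j \setminus T_2$ lying on the unique cycle of $S_j + f_j$; such an $e_j$ exists because that cycle cannot be contained in the tree $T_2$, and this choice strictly decreases $S_j \triangle T_2$. Each step is realized on diagrams by first \emph{adding} the clasp $f_j$ via the finger-move lemma, with exactly the arcs and crossing data it carries in $C_2$, and then \emph{removing} the clasp $e_j$: the intermediate diagram $S_j + f_j$ contains a cycle and is a fortiori connected, so deleting the non-bridge $e_j$ leaves $S_{j+1}$ connected, and every diagram in between is connected. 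After the final step the clasps present are precisely those of $C_2$, so the diagram agrees with $C_2$ up to isotopy and a rerouting of the connecting arcs, itself a sequence of $R2$ and $R3$ moves through connected diagrams. Chaining these exchanges with the reductions of the first paragraph produces a path of Reidemeister moves from $C_1$ to $C_2$ that stays entirely within connected diagrams.
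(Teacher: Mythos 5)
Your argument is correct in outline, but it takes a substantially longer route than the paper's. The paper's proof is a two-phase superimposition: starting from $C_1$, first perform the $R2$ moves that were used to build $C_2$ but not $C_1$, and then undo the $R2$ moves that were used for $C_1$ but not for $C_2$. During the first phase every intermediate diagram still contains all of $C_1$'s clasps, and during the second phase all of $C_2$'s, so connectivity is automatic and no graph-theoretic bookkeeping is required. Your reduction to spanning trees and the basis-exchange induction are therefore pure overhead: since one is allowed to pass through diagrams carrying \emph{both} families of clasps simultaneously, there is no need to interpolate through minimal, tree-like connections one edge swap at a time, and the ``shrink a non-bridge clasp to a clean bigon'' preprocessing is likewise unnecessary. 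What your version does buy is a more careful treatment of a point the paper leaves implicit: once $C_1$'s clasps are in place, a pair of edges prescribed for $C_2$ need no longer bound a common face of the current diagram, so the corresponding $R2$ move may have to be realized as a detoured finger move; your finger-move lemma (with the tongue routed entirely over or entirely under everything it meets, so that it can afterwards be retracted or rerouted onto the standard clasp of $C_2$ by detour moves) supplies exactly that justification. If you keep that lemma but replace the spanning-tree exchange with the superimpose-then-prune argument, you recover the paper's proof with its one soft spot made rigorous.
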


\begin{proof}
Consider all the pairs of edges in $U_L$ used to obtain $C_2$ from $L$ that were not used to obtain $C_1$. Then use these edges to apply $R2$ moves to $C_1$. Next undo all the $R2$ moves applied to $L$ to obtain $C_1$, that weren't also applied to obtain $C_2$. The resulting diagram is $C_2$, and intermediate diagrams in this equivalence are clearly all connected.
\end{proof}

\begin{corollary}
An invariant of connected linkoids can be extended to split linkoids by defining the value of a split linkoid to be that of an equivalent connected linkoid.
\end{corollary}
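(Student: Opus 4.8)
The plan is to make the extension explicit and then verify that it is a well-defined invariant of \emph{all} linkoid diagrams, not merely of connected ones. Write $f$ for the given invariant of connected linkoids, so that $f$ is defined on connected diagrams and is unchanged by any sequence of Reidemeister moves all of whose terms are connected. For a split diagram $D$ I would connect it by a sequence of $R2$ moves of the type appearing in Lemma \ref{lm:split} --- each joining two edges lying on a common face of $U_D$ --- until a connected diagram $c(D)$ is reached, and set $\tilde f(D) := f(c(D))$; on connected diagrams I put $\tilde f := f$. Lemma \ref{lm:split} shows at once that $f(c(D))$ is independent of the particular connecting moves used, so $\tilde f$ is well defined on every individual diagram. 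It then remains to prove that $\tilde f$ is constant on equivalence classes, for which it suffices to treat a single move $D \to D'$ and compose along any equivalence.

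The easy cases are $R1$, $R3$, and those $R2$ moves acting between edges already in the same connected component, since none of these alters the partition of the universe into connected components. In each such case I would connect $D$ and $D'$ by the \emph{same} family of auxiliary $R2$ moves, performed away from the locus of the move; the resulting connected diagrams $c(D)$ and $c(D')$ then differ by the single move in question, which is now a move through connected diagrams, whence $f(c(D)) = f(c(D'))$. Isotopy of $\Sigma$ is handled the same way, as it changes neither crossings nor connectivity and one may connect before and after identically.

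The genuinely delicate case, which I expect to be the main obstacle, is an $R2$ move that changes the connectivity of the universe, merging two components into one (equivalently, read backwards, splitting one into two). Here the connection schemes for $D$ and $D'$ cannot be taken identical, since $D'$ requires one fewer connecting clasp than $D$. I would handle this by first connecting $D$ to $c(D)$ and then applying to $c(D)$ the very same $R2$ move that produced $D'$ from $D$. Because the two strands involved are already joined in $c(D)$, this move preserves connectedness and yields a connected diagram $C^{*}$ that is itself obtained from $D'$ by connecting $R2$ moves of the Lemma \ref{lm:split} type; hence $f(C^{*}) = \tilde f(D')$ by the per-diagram well-definedness already established. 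On the other hand $c(D) \to C^{*}$ is a single move through connected diagrams, so $f(c(D)) = f(C^{*})$, and combining the two equalities gives $\tilde f(D) = \tilde f(D')$. (When $D$ is itself connected and the move disconnects it, this is the special instance in which no auxiliary clasps are needed and $C^{*}$ may be taken to be $D$ with the clasp reinstated.)

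Assembling these steps, $\tilde f$ is constant on equivalence classes of linkoid diagrams and restricts to $f$ on connected diagrams, so it is the desired extension and, by construction, assigns to each split linkoid the value of an equivalent connected linkoid. The crux of the whole argument is the connectivity-changing $R2$ move: one must simultaneously keep every diagram connected so that $f$ applies \emph{and} reconcile the differing numbers of connecting clasps before and after the move, and it is exactly here that Lemma \ref{lm:split} does the essential work of absorbing a connecting clasp into the move.
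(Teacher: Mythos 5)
Your argument is correct and rests on exactly the ingredient the paper intends: the paper states this corollary without proof as an immediate consequence of Lemma \ref{lm:split}, and your proposal simply supplies the routine verification (well-definedness of the connected representative via the lemma, plus the case analysis on whether a move changes connectivity, with the connectivity-changing $R2$ move absorbed as a connecting clasp). No gap; this is the same approach, spelled out in full.
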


A geometrical interpretation for knotoids in $S^2$ was given in \cite{turaev2012knotoids} in terms of `simple $\theta$-curves' in $S^3$. Later in \cite{gabrovvsek2023invariants} it was shown that similarly, linkoids can be interpreted as generalized $\theta$-graphs in $S^3$, of which theta-curves are a special case.

\begin{definition}\cite{gabrovvsek2023invariants}
A \emph{generalized} $\theta$-graph is an embedding in $S^3$ of a connected graph $G$ with an even number, say $2n$, of trivalent vertices, labeled $v_i$ and $w_i$ where $i \in \{1,\ldots,n\}$, as well as two distinguished degree $2n$ vertices $v_{\infty}$, $v_{-\infty}$.
The edge set $E(G)$ consists of edges $\{v_iw_i\}_{i=1}^n$, edges $\{v_i v_{\infty}\}_{i=1}^n$ and $\{v_i v_{-\infty}\}_{i=1}^n$ connecting $v_i$ to the vertices $v_{\infty}$ and $v_{-\infty}$, and edges 
 $\{w_i v_{\infty}\}_{i=1}^n$ and $\{w_i v_{-\infty}\}_{i=1}^n$ connecting $w_i$ to the points $v_{\infty}$ and $v_{-\infty}$.
Note that each vertex $v_i$ or $w_i$ is adjacent to both $v_{\infty}$ and  $v_{-\infty}$.

\end{definition}

\begin{definition}
A generalized $\theta$-graph is {\bf simple} if for every $i,j \in \{1,\ldots,n\}$, the spatial subgraph induced by edges $v_i v_{\infty}$, $v_i v_{-\infty}$, $w_j v_{\infty}$, and $w_j v_{-\infty}$ is an unknotted cycle. 
\end{definition}

\begin{theorem}\label{thm:linkoidsandtheta}\cite{gabrovvsek2023invariants}
There is a one-to-one correspondence between linkoids and simple generalized theta-graphs considered up to label-preserving ambient isotopy of $S^3$.
\end{theorem}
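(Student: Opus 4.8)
The plan is to exhibit explicit maps in both directions and show they descend to mutually inverse bijections. For the forward direction I would realize $S^3$ as the unreduced suspension of $S^2$, with suspension points $v_\infty$ and $v_{-\infty}$ and the equatorial copy of $S^2$ at the middle level; by Lemma~\ref{lm:standard} a spherical linkoid may be placed there as a diagram $L$. Given $L$, I build a spatial graph $\theta(L)$ by taking, for the $i$-th knotoidal component, its tail as $v_i$ and its head as $w_i$, letting the edge $v_iw_i$ be the immersed arc of that component (carrying all of its over/under data), and adjoining the four vertical suspension arcs joining $v_i$ and $w_i$ to each of $v_\infty,v_{-\infty}$ as the edges $v_iv_\infty, v_iv_{-\infty}, w_iv_\infty, w_iv_{-\infty}$. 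Resolving the crossings according to the diagram yields an embedding of the required graph $G$ with $2\kappa$ trivalent vertices and two degree-$2\kappa$ cone vertices, so $n=\kappa$; loop components, having no endpoints, are carried along as a disjoint embedded sublink. Because each suspension arc is genuinely vertical, every cycle built from four suspension edges is a suspension of two points of $S^2$, hence unknotted, so $\theta(L)$ is automatically \emph{simple}.

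Next I would check that $\theta$ factors through linkoid equivalence. Each of $R1,R2,R3$ is supported in an equatorial disk disjoint from the endpoints, so it extends to an ambient isotopy of $S^3$ fixing all suspension arcs, and an isotopy of the equatorial $S^2$—including a move carrying an arc across the point at infinity, i.e.\ a spherical move—extends to a level-preserving isotopy of the suspension. Conversely, a forbidden move of Figure~\ref{fig:forbiddenmoves} slides an equatorial strand over an endpoint and would therefore have to pull the arc $v_iw_i$ through a suspension edge incident to $v_i$ or $w_i$, which is not an ambient isotopy of the graph. This asymmetry is exactly what prevents $\theta$ from collapsing inequivalent linkoids, and it is where the labelling of the vertices (the $v_i,w_i$ distinguished and $v_\infty\neq v_{-\infty}$) is essential.

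For the inverse I would use the analogue of the Reidemeister theorem for spatial graphs—$R1$–$R3$ together with the moves sliding an edge past a vertex—applied to diagrams of a simple generalized theta-graph in which the cone vertices are pinned at prescribed points. The simplicity hypothesis states that each cycle formed by suspension edges is unknotted; I would use this, together with the fact that $v_\infty$ and $v_{-\infty}$ carry all of their incident edges, to isotope all suspension edges simultaneously into standard vertical position, thereby re-presenting $S^3$ as a suspension with the cone vertices at the poles. Projecting the edges $v_iw_i$ to the equator then reads off a linkoid diagram $L(\theta)$, well defined up to Reidemeister and spherical moves, which gives the inverse assignment.

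That the two maps are mutually inverse would then follow: $L(\theta(L))=L$ because flattening literally undoes the suspension, while $\theta(L(\theta))=\theta$ up to label-preserving ambient isotopy because straightening the suspension edges and re-suspending recovers the graph once simplicity is invoked. The main obstacle, exactly as in Turaev's theta-curve theorem for knotoids \cite{turaev2012knotoids}, lies in the inverse direction: one must straighten all suspension edges to vertical arcs simultaneously while keeping the high-valence cone vertices $v_\infty,v_{-\infty}$ fixed, and one must verify that the vertex-slide moves at these cone vertices translate precisely into spherical moves of the equatorial diagram rather than into forbidden moves. Controlling the cone vertices and ensuring the straightening isotopy never forces a strand across an endpoint is the delicate part, and the simplicity condition is precisely what makes it possible.
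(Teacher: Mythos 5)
Your forward construction coincides with the one the paper records: place the diagram on an equatorial $S^2\subseteq S^3$, take the tails and heads as the trivalent vertices $v_i,w_i$, adjoin two cone vertices above and below, and connect them by vertical (straight) arcs. Be aware, though, that the paper does not prove this theorem at all --- it explicitly states that the full proof is in \cite{gabrovvsek2023invariants} and only describes the forward assignment. So the question is whether your proposal would stand as an actual proof, and it would not: the two places you wave at are exactly where the mathematical content lives.

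First, injectivity. Observing that a forbidden move ``would have to pull the arc $v_iw_i$ through a suspension edge'' only shows that the \emph{obvious} isotopy fails to extend; it does not rule out the existence of some other label-preserving ambient isotopy of $S^3$ identifying $\Theta(L)$ and $\Theta(L')$ for inequivalent $L,L'$. Injectivity cannot be argued move-by-move on the linkoid side; one needs either an invariant of the spatial graph that separates the images, or the normalization theorem you invoke in the other direction. Second, that normalization --- isotoping all $4n$ suspension edges simultaneously into vertical position while the two $2n$-valent cone vertices stay pinned --- is the heart of the theorem, and ``the simplicity condition is precisely what makes it possible'' is an assertion, not an argument. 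Simplicity is a \emph{pairwise} condition (each $4$-cycle through $v_\infty, v_{-\infty}$ built from one $v_i$ and one $w_j$ is unknotted), and deducing a global, simultaneous straightening from pairwise unknottedness is exactly the kind of step that can fail in spatial graph theory (compare: a link whose $2$-component sublinks are all trivial need not be trivial). Even in Turaev's $n=1$ case this step requires real work. As written, your proposal is a correct plan whose decisive lemma is named but not proved, which is essentially the same epistemic status as the paper's own citation to \cite{gabrovvsek2023invariants}.
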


\begin{proof}
A full proof is given in \cite{gabrovvsek2023invariants}. For our purposes it suffices to describe the assignment constituting this proof here, without showing that it is indeed a bijection. The correspondence assigns a generalized theta-graph $\Theta(L)$ to a spherical linkoid diagram $L$ as follows:

Say $L$ has $2n$ endpoints. We see $L$ as being embedded in $S^3$ by perturbing its crossing arcs away from the embedding surface $S^2\subseteq S^3$ according to the crossing information. 
We label each knotoidal component of $L$ by $i \in \{1,\ldots,n\}$, and denote by $v_i$ the tail and by $w_i$ and the head of component $i$, such that component $i$ corresponds to an edge $v_iw_i$ in $\Theta(L)$. We also add two vertices, denoted by $v_{\infty}$ and $v_{-\infty}$ far above and below $L$, respectively.
Finally we add edges $v_i v_{\infty}$, $v_i v_{-\infty}$, $w_i v_{\infty}$, and $w_i v_{-\infty}$ that connect the vertices $v_i$ and $w_i$ to the vertices $v_{\infty}$ and $v_{-\infty}$ by straight line segments, for all $i \in \{1,...,2n\}$. See Figure \ref{fig:theta_graph} for an example of this assignment.

\end{proof}

\begin{figure}[ht]
\centering
\includegraphics[width=0.65\linewidth]{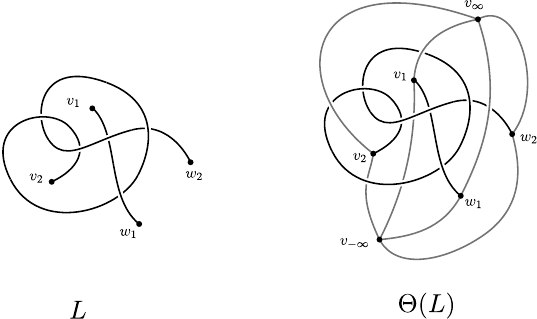}

\caption{A linkoid and the corresponding generalized $\Theta$-graph \cite{gabrovvsek2023invariants}.}

\label{fig:theta_graph}

\end{figure}

\subsection{Generalized knotoids}\label{subsec:generalized}

In studying linkoids, we will make use of another recent generalization of knotoids, namely `\textit{generalized knotoids}' \cite{adams2022generalizations}. We briefly introduce these here, as well as the starred linkoids defined in \cite{MAP} which form a subclass of generalized knotoids. Both of these generalizations are used in the subsequent sections to modify linkoid diagrams in order to make them amenable to the invariants we introduce in Section \ref{sec:MAP}.

Generalized knotoids are essentially link diagrams modelled on some graph, in which the graph's vertices (which may have arbitrary degree) are treated as knotoid endpoints in the sense that arcs are not allowed to be moved over or under them.

\begin{definition}
Let $\Sigma$ be a surface, $G$ a graph\footnote{We allow graphs with multiple edges, loops, and vertices of degree zero.}, and $\tilde{G}=G\sqcup \left( \bigsqcup_{i=0}^n S^1 \right)$ for some $n\in\mathbb{Z}_{\geq 0}$. A \textit{generalized knotoid diagram} is an immersion of $\tilde{G}$ in $\Sigma$ all of whose singularities are double points that occur away from the vertices of $\tilde{G}$ and are endowed with over/under-crossing data. The images of the vertices of $\tilde{G}$ are called the \textit{nodes} of the generalized knotoid. We consider generalized knotoid diagrams up to the usual Reidemeister moves and isotopy of $\Sigma$. Note that moves involving the nodes are forbidden, particularly the twist move and arc slide depicted in Figure \ref{fig:forbidden_vertex}. A \textit{generalized knotoid} is an equivalence class of generalized knotoid diagrams.
\end{definition}

\begin{figure}[ht]
    \centering
    \includegraphics[clip, trim=1cm 16cm 6cm 1cm, width=0.85 \linewidth]{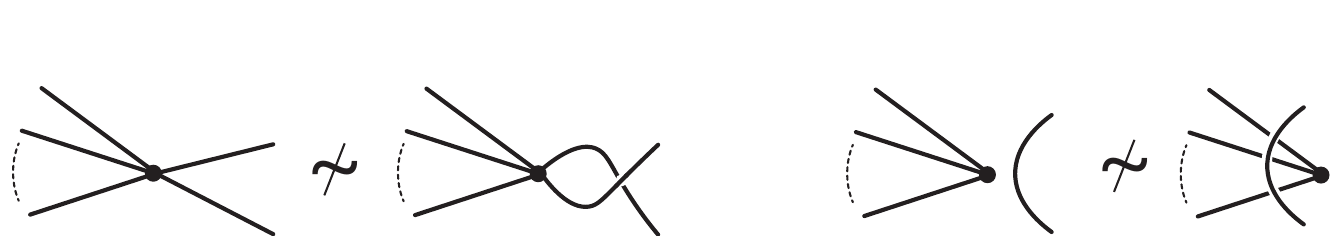}
    \caption{Forbidden moves at a node: a twist move (left) and an arc slide (right).}
    \label{fig:forbidden_vertex}
\end{figure}

Throughout this paper we will restrict our attention to generalized knotoids on $\Sigma=S^2$, and simply refer to these as `generalized knotoids'. See Figure \ref{fig:genknot_ex} for some examples of generalized knotoids. In \cite{MAP}, `starred' linkoid diagrams were considered, which can be seen as a subclass of generalized knotoids:

\begin{figure}[ht]
    \centering
    \includegraphics[width=0.8\linewidth]{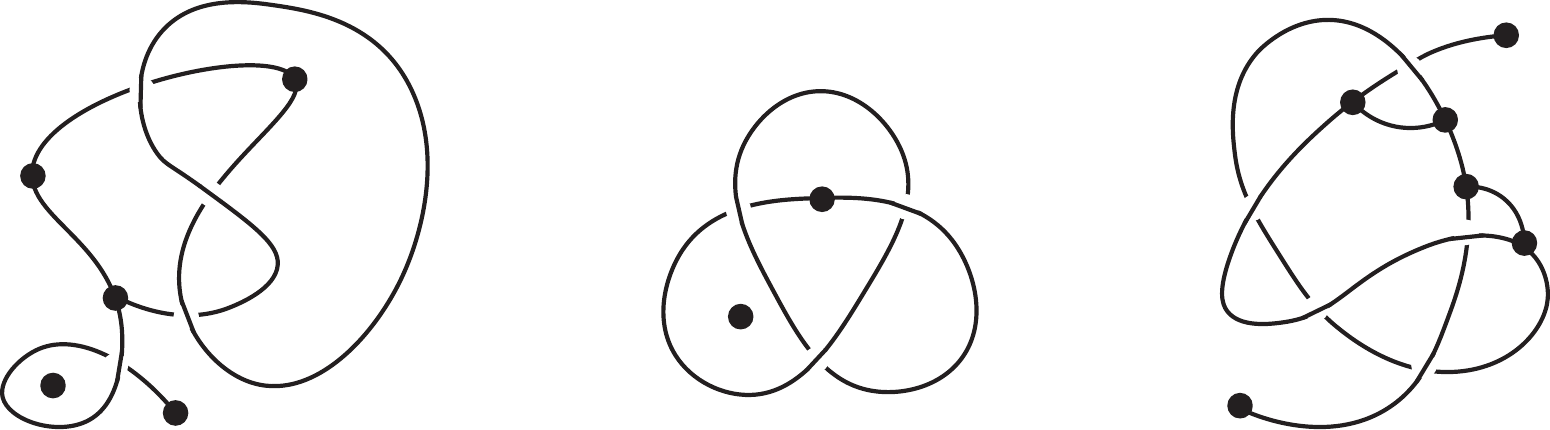}
    \caption{Examples of generalized knotoids.}
    \label{fig:genknot_ex}
\end{figure}

\begin{definition}
A \textit{starred linkoid diagram} is a linkoid diagram on $S^2$ some of whose regions and/or crossings have been decorated with a star. Two starred linkoid diagrams are said to be \textit{equivalent} if they can be related by surface isotopy and Reidemeister moves that do not move arcs across the stars. A \textit{starred linkoid} is an equivalence class of starred linkoid diagrams.
\end{definition}

See Figure \ref{fig:starred_ex} for two examples of starred linkoid diagrams. These two diagrams are not equivalent, as one has a starred crossing and the other does not.

\begin{figure}[ht]
    \centering
    \includegraphics[width=0.45\linewidth]{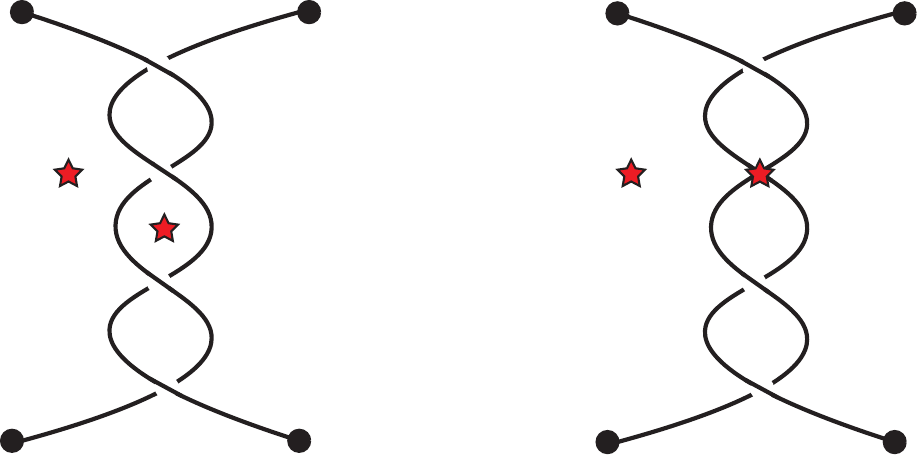}
    \caption{Two inequivalent starred linkoids.}
    \label{fig:starred_ex}
\end{figure}

\begin{example}\label{ex:planar_linkoids}
Planar linkoids are equivalent to a subclass of starred linkoids, namely those containing exactly one star in some region. This equivalence is given by interpreting the star in such a starred linkoid as the `point at infinity' in $S^2$ missing from $\mathbb{R}^2$. Clearly the equivalence of such starred linkoids carries over exactly to the equivalence of planar linkoids under this correspondence. Similarly linkoids on punctured surfaces can be seen as starred linkoids on compact surfaces.
\end{example}

\begin{remark}\label{rk:star_gen}
Starred linkoids are in turn clearly equivalent to a subclass of generalized knotoids, namely those that only have nodes of degree zero, one, or four. Under this equivalence a degree zero node corresponds to a star in a region, a degree one node to an endpoint, and a degree four node to a starred crossing. Note that the equivalence relations on generalized knotoids and starred linkoids also correspond with each other under this equivalence.
\end{remark}

Moreover using Example \ref{ex:planar_linkoids} one can also see planar linkoids as a subclass of generalized knotoids, namely those with only nodes of degree one and a single node of degree zero.

\section{The Mock Alexander Polynomial}\label{sec:MAP}

In this section we discuss the invariant that is the main subject of this article, the `\textit{mock Alexander polynomial}'. This polynomial invariant was defined in \cite{MAP}, and consists of an extension of the state sum formulation of the Alexander-Conway polynomial for links, which was first given in \cite{kauffman1983formal}.

\subsection{The state sum}\label{subsec:state_sum}

For the moment, assume that $L$ is a linkoid diagram such that the number of regions in $L$ is equal to its number of crossings. In this case, there is a bijection between regions and crossings. Every crossing is incident to four regions locally, which we call the quadrants adjacent to the crossings. These regions may or may not be distinct: for example these regions are not distinct if the crossing is nugatory or the crossing is adjacent to an endpoint of a knotoidal component. 

\begin{definition}
A \textit{state} of a linkoid diagram $L$ is a bijection between its regions and crossings such that each crossing is assigned to one of its adjacent quadrants.
\end{definition}

We represent a state $s$ of $L$ by a copy of the universe $U_L$ of $L$, with a small black wedge at each crossing pointing into the region associated to that crossing by $s$. See Figure \ref{fig:state_ex} for an example linkoid and a list of its states.

\begin{figure}[h]
    \centering
    \includegraphics[width=0.55\linewidth]{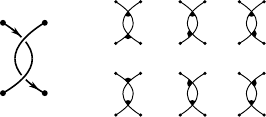}
    \caption{A spherical $(2,0)$-linkoid (left) and its states (right).}
    \label{fig:state_ex}
\end{figure}

We describe a state sum polynomial for linkoid diagrams, taking values in $\mathbb{Z}[W,B]$, by associating a weight to every state as follows:

\begin{definition}
Given a linkoid $L$, we assign local weights at each of its crossings by labelling the four quadrants adjacent to the crossing according to Figure \ref{fig:weights}. Given a state $s$ and a crossing $c$ of $L$ let $w_s(c)$ denote the label in the quadrant adjacent $c$ that is assigned to $c$ in the state $s$. We define the \textit{weight} $\langle L \vert s \rangle$ of a state $s$ by
\[
    \langle L \vert s \rangle = \prod_{c\in \mathcal{C}(L)} w_s(c) \in \mathbb{Z}[W,B],
\]
where $\mathcal{C}(L)$ denotes the set of all crossings in $L$.
\end{definition}

\begin{figure}[ht]
    \centering
    \includegraphics[width=0.35\linewidth]{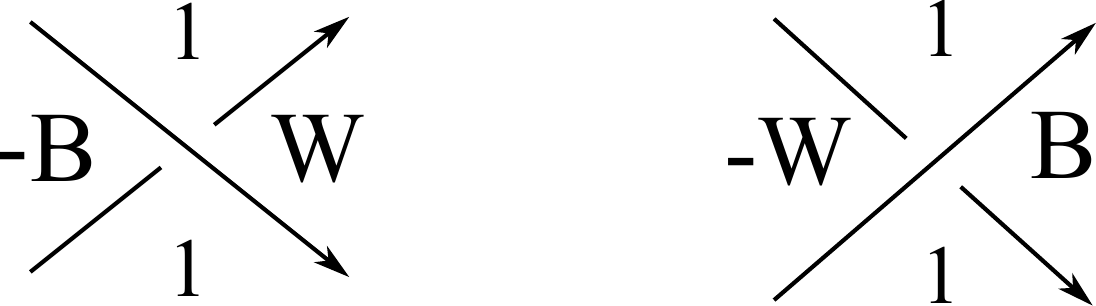}
    \caption{The local weight contributions at a positive crossing (left) and at a negative crossing (right).}
    \label{fig:weights}
\end{figure}

\begin{definition}\label{def:potential}
Let $L$ be a connected linkoid diagram with equal numbers of regions and crossings. Then the \textit{potential} of $L$, denoted $\nabla_L$, is defined by
\[
    \nabla_L(W,B) = \sum_{s\in \mathcal{S}(L)} \langle L \vert s\rangle,
\]
where $\mathcal{S}(L)$ is the set of states of $L$.
\end{definition}

\begin{remark}
The potential is not an invariant of linkoids, but as is shown in \cite{MAP} it becomes one after the substitution $B=W^{-1}$. We return to this point after a discussion of the requirement that there be equally many regions and crossings.
\end{remark}

\begin{definition}
Let $L$ be a linkoid diagram, and let $n$ and $f$ be its numbers of crossings and regions, respectively. We say that $L$ is \textit{admissible} if $f=n$.
\end{definition}

Most linkoid diagrams are not admissible, although the example in Figure \ref{fig:state_ex} is. To quantify the non-admissibility we define the obstruction of a linkoid as follows.

\begin{definition}
Let $L$ be a linkoid diagram, and $n,f$ as before. We define the \textit{obstruction} of $L$ to be $\Omega(L)=n-f$.
\end{definition}

Clearly $L$ is admissible if and only if $\Omega(L)=0$. The obstruction of a connected linkoid diagram is easily found using Euler's formula, $v-e+f=2$, for connected planar graphs:

\begin{proposition} \cite{MAP}\label{prop:obstruction}
Let $L$ be a connected spherical linkoid diagram with $\kappa$ knotoidal components. Then $\Omega(L)=\kappa-2$.
\end{proposition}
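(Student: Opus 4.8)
The plan is to apply Euler's formula directly to the universe $U_L$ of the linkoid diagram, which is a connected planar graph. The key is to count the vertices and edges of $U_L$ in terms of the number of crossings $n$ and the number of knotoidal components $\kappa$. First I would recall that a spherical linkoid diagram is drawn on $S^2$, so Euler's formula takes the form $v-e+f=2$, where $v$, $e$, and $f$ are the numbers of vertices, edges, and faces (regions) of $U_L$ respectively. Since the obstruction is $\Omega(L)=n-f$, it suffices to express $f$ in terms of $n$ and $\kappa$, and this will follow from computing $v-e$.

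Next I would count vertices and edges. By the definition of the universe, every crossing becomes a degree-$4$ vertex and every endpoint of a knotoidal component becomes a degree-$1$ vertex. Since there are $n$ crossings and each of the $\kappa$ knotoidal components contributes two endpoints (a tail and a head), we have $v = n + 2\kappa$. To count edges I would use the handshake lemma: the sum of vertex degrees equals $2e$. The $n$ crossing-vertices contribute $4n$ to this sum and the $2\kappa$ endpoint-vertices contribute $2\kappa$, so $2e = 4n + 2\kappa$, giving $e = 2n + \kappa$. Here I am using that the loop components ($\ell$ of them) do not contribute any vertices, since they carry no endpoints; their crossings are already accounted for among the $n$ crossings.

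Substituting into Euler's formula yields $f = 2 - v + e = 2 - (n+2\kappa) + (2n+\kappa) = n - \kappa + 2$. Therefore $\Omega(L) = n - f = n - (n-\kappa+2) = \kappa - 2$, as claimed.

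The main subtlety to watch, rather than a genuine obstacle, is handling degenerate configurations in the edge count. A loop component with no crossings, or an isolated knotoidal arc with no crossings, does not fit the naive handshake bookkeeping because it forms an edge whose endpoints are not the generic mix of degree-$4$ and degree-$1$ vertices assumed above (a crossingless loop has no vertices at all and a crossingless arc has only its two endpoints). Since we assume $L$ is connected, a crossingless loop cannot appear as a separate component, and I would note that a single crossingless knotoidal arc with its two endpoints still satisfies $v=2$, $e=1$, $f=1$, consistent with $\Omega=\kappa-2=-1$; so the formula survives these base cases. The cleanest way to sidestep such casework is simply to invoke the handshake lemma globally as above, since it correctly totals the edge count regardless of how the edges are distributed, provided $U_L$ is connected so that Euler's formula applies with the constant $2$.
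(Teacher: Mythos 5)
Your proof is correct and follows exactly the route the paper intends: the paper states only that the result ``is easily found using Euler's formula $v-e+f=2$'' (deferring details to \cite{MAP}), and your vertex count $v=n+2\kappa$, handshake edge count $e=2n+\kappa$, and substitution $f=n-\kappa+2$ are precisely that computation, with the degenerate crossingless cases handled sensibly.
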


\begin{corollary}
If $L$ is a connected linkoid diagram in $S^2$, then $\Omega(L) =0$ if and only if $\kappa=2$.
\end{corollary}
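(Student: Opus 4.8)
The plan is to extract the combinatorial data of the universe $U_L$ and substitute it into Euler's formula on $S^2$. Since $L$ is connected, $U_L$ is a connected graph cellularly embedded in $S^2$, so that $V-E+F=\chi(S^2)=2$, where the faces of $U_L$ are precisely the regions of $L$ and hence $F=f$. It then remains only to express the vertex and edge counts $V$ and $E$ in terms of $n$ and $\kappa$.

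First I would count vertices. By definition the vertices of $U_L$ are the $n$ crossings, each of degree $4$, together with the endpoints of the knotoidal components, each of degree $1$; since every knotoidal component has a tail and a head, there are $2\kappa$ of the latter. Loop components contribute no vertices of their own, so $V=n+2\kappa$. The handshake lemma then gives $2E=4n+2\kappa$, i.e. $E=2n+\kappa$. Substituting into Euler's formula,
\[
(n+2\kappa)-(2n+\kappa)+f=2,
\]
which simplifies to $\Omega(L)=n-f=\kappa-2$, proving the Proposition. The Corollary is then immediate, since $\kappa-2=0$ exactly when $\kappa=2$.

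The only real obstacle is justifying the handshake count, which tacitly assumes that every strand of $L$ is an edge bounded by two vertices. This can fail only for a loop component bearing no crossings, that is, a free circle meeting nothing else; but such a circle is a separate component of $U_L$ and is therefore excluded by the connectedness hypothesis, unless $L$ is itself a single crossingless loop. In that degenerate case one verifies directly that $n=0$, $f=2$ and $\kappa=0$, so that $\Omega(L)=-2=\kappa-2$ as required. Alternatively, one may first apply an $R1$ move to any offending loop in order to create a crossing; this changes neither $\kappa$ nor the difference $n-f$, since an $R1$ move adds one crossing and one region while an $R2$ move adds two of each, and so reduces matters to the generic case treated above.
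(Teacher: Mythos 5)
Your argument is correct and follows exactly the route the paper indicates: the Corollary is immediate from Proposition \ref{prop:obstruction}, which the paper says is "easily found using Euler's formula," and your vertex/edge count ($V=n+2\kappa$, $E=2n+\kappa$, giving $n-f=\kappa-2$) is precisely that computation, with the crossingless-loop degeneracy correctly handled via connectedness.
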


Proposition \ref{prop:obstruction} implies that $\Omega(L)$ is a linkoid invariant, assuming that we restrict our attention to connected linkoids. The assumption of connectivity here is important: If two components of the universe of a split link are connected by an $R2$-move, two crossings are created and only one region. Therefore carrying out such a connecting move reduces $\Omega(L)$ by one, and $\Omega(L)$ is not an invariant of split linkoids. Consequently we do not define $\nabla_L$ for split linkoids, but restrict our attention to connected linkoids by appealing to Lemma \ref{lm:split}.

There are other ways of adjusting $\Omega(L)$ by changing the diagram of a linkoid slightly. The most direct way, discussed at length in \cite{MAP}, is to add stars to a linkoid diagram $L$, replacing it with a starred linkoid. We then proclaim that these starred crossings and regions don't contribute to the state sum, or more precisely they cannot be selected by a state.

\begin{definition}\label{def:states_starred}
Let $L$ be a starred linkoid diagram, and let $n_\star$ and $f_\star$ be its numbers of \textit{unstarred} crossings and regions, respectively. The \textit{obstruction} of $L$ is defined to be $\Omega_\star(L)=n_\star-f_\star$. $L$ is said to be \textit{admissible} if $\Omega_\star(L)=0$. A \textit{state} of an admissible starred linkoid is a bijective assignment of an adjacent unstarred region to each unstarred crossing. If $L$ is admissible, the \textit{potential} $\nabla_L(W,B)$ of $L$ is defined by the same state sum in Definition \ref{def:potential}.
\end{definition}

By replacing a linkoid diagram $L$ with a starred diagram $L'$, one can obviously always adjust $\Omega(L)$ to be zero. In other words, any linkoid diagram can be made admissible by endowing a number of its regions or crossings by stars. Specifically if $L$ is a knotoid diagram, $\Omega(L)=-1$ by Proposition \ref{prop:obstruction}. This means that endowing one of the regions of $L$ with a star turns $L$ into an admissible diagram. Example \ref{ex:one} gives the calculation of $\nabla_L(W,B)$ for two starred knotoid diagrams.

\begin{example}\label{ex:one}
Let $K_1$ be the starred knotoid depicted in Figure \ref{fig:labeltwo}, whose star is depicted as a degree zero node in accordance with Remark \ref{rk:star_gen}. Figure \ref{fig:labeltwo} depicts all the states of $K_1$ as well as their weights. It follows that
\[
    \nabla_{K_1}(W,B) = -W^3 + W^2B -WB^2 + B^3 + B^4.
\]

Similarly Figure \ref{fig:allstates2} depicts another starred knotoid $K_2$, which is identical to $K_1$ except for the starred region, as well as its states and their weights. From these we find
\[
    \nabla_{K_2}(W,B) = W^4 - W^3 + W^2B - WB^2 + B^3
\]

\begin{figure}[ht]
\centering
\includegraphics[width=\textwidth]{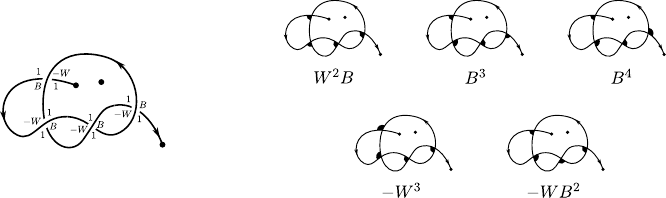}
\caption{The starred knotoid $K_1$ (left) and its states including weights (right).}
\label{fig:labeltwo}
\end{figure}

 
\begin{figure}[ht]
\centering
\includegraphics[width=\linewidth]{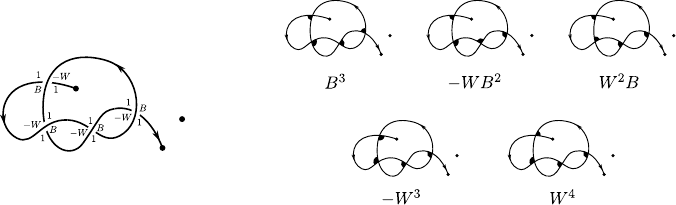}
\caption{The starred knotoid $K_2$ (left) and its states including weights (right).}
\label{fig:allstates2}
\end{figure}

\end{example}

The value of $\nabla_{L'}(W,B)$ evidently depends on the chosen way of starring $L$, and therefore cannot be used to construct an invariant of $L$ unless the construction of $L'$ is somehow canonical. We return to the question of starring $L$ canonically for knotoids in Section \ref{sec:knotoids}, and for linkoids in Section \ref{sec:linkoids}. For now we take for granted that a given starred linkoid is admissible, and derive an invariant from $\nabla_{L'}(W,B)$ as follows.

\begin{definition}\label{def:MAP}
Let $L$ be an admissible, connected (starred) linkoid diagram. The \textit{mock Alexander polynomial} of $L$, denoted $\nabla_L(W)$ by slight abuse of notation, is defined to be $\nabla_L(W,W^{-1})$ i.e.~the potential of $L$ after substituting $B=W^{-1}$.
\end{definition}

\begin{remark}
To explain the nomenclature `mock' Alexander polynomial we note that the Definition of $\nabla_L(W,B)$ given in \cite{MAP} is a generalization of the state sum formulation for the Alexander polynomial from \cite{kauffman1983formal}. Here it was shown that for a link diagram $L$ the obstruction is $\Omega(L)=2$, and if $L'$ is the starred link diagram obtained by starring two adjacent regions in a diagram of $L$ then $\nabla_{L'}(W)$ is equal to the Alexander polynomial of $L$ up to a change of variables.
\end{remark}

Since admissibility of a diagram is invariant under the Reidemeister moves (assuming all diagrams are connected, without loss of generality by Lemma \ref{lm:split}), the statement of the following theorem makes sense.

\begin{theorem}\cite{MAP}\label{thm:invariance}
The mock Alexander polynomial $\nabla_L(W)$ is invariant under surface isotopy and the Reidemeister moves, and hence is an invariant of admissible (starred) linkoids.
\end{theorem}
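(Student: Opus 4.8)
The plan is to verify invariance under each Reidemeister move separately, together with surface isotopy, which is immediate since the state sum depends only on the combinatorial universe $U_L$ and the crossing data, both of which are preserved by isotopy of $\Sigma$. The heart of the argument is a local-to-global analysis: each Reidemeister move alters the diagram only inside a disk $D$, and I would show that the contributions of states to $\nabla_L(W)$ supported near $D$ transform in a way that leaves the total sum unchanged, while states away from $D$ match up in an obvious bijection. The key technical point is that applying a Reidemeister move changes both the number of crossings and the number of regions in a controlled way that preserves admissibility (this is guaranteed by Proposition \ref{prop:obstruction}, or more precisely by the fact that $\Omega_\star$ is unchanged), so that on both sides we are genuinely summing over states of admissible diagrams.

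First I would treat the substitution $B=W^{-1}$ as essential rather than cosmetic, since the Remark after Definition \ref{def:potential} already signals that $\nabla_L(W,B)$ is \emph{not} invariant before the substitution. Concretely, from Figure \ref{fig:weights} the four local weights at a positive crossing are (in the labelling convention of \cite{kauffman1983formal}) drawn from $\{1,-1,W,B\}$, and the cancellations that produce invariance rely on identities such as $WB=1$ and $W\cdot B = B\cdot W$. I would carry out the local computation for each move as follows. For $R1$, the added (or removed) crossing contributes exactly one extra region; a state of the new diagram either assigns the new region to the new crossing or does not, and I would show the weighted sum of the two resulting local contributions collapses to the old weight, using $W+B$-type cancellations that become trivial after $B=W^{-1}$. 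For $R2$, two crossings and one region are created, and the local states split into those that pair the two new crossings internally versus those that push an assignment outward; the internal pairings contribute weights whose product is $WB=1$ (or $-WB=-1$) and cancel appropriately, leaving a bijection onto the old states. For $R3$, the number of crossings and regions is unchanged, so I would exhibit a weight-preserving bijection between states of the two sides, handled quadrant-by-quadrant.

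The main obstacle, and the step I expect to require the most care, is the $R3$ move and the bookkeeping of signs. Unlike $R1$ and $R2$, the $R3$ move does not change crossing or region counts, so invariance must come from an exact matching of weighted states across a reconfiguration of three strands and three crossings; setting up the correct bijection on the eight-or-so relevant local states, and checking that the product of local weights $w_s(c)$ agrees on matched states after $B=W^{-1}$, is where the combinatorics is genuinely delicate. A secondary subtlety is that a Reidemeister move performed on a connected diagram might momentarily pass through, or create, configurations requiring the split-diagram caveat; here I would invoke Lemma \ref{lm:split} and its Corollary to ensure all intermediate diagrams can be taken connected, so that $\nabla_L(W)$ is defined throughout. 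I would also note that for \emph{starred} diagrams the same local analysis applies verbatim, since the moves are by hypothesis not allowed to move arcs across stars, and thus the stars simply restrict which quadrants are available to states without altering the local cancellation patterns. Assembling these local verifications then yields invariance under the full equivalence relation, completing the proof.
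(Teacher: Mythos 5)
The paper does not actually prove this theorem: the text following the statement reads ``A proof of Theorem \ref{thm:invariance} is given in \cite{MAP},'' so the result is imported wholesale from the reference. Your overall strategy --- check surface isotopy trivially, then verify each Reidemeister move by a local analysis of states, with the substitution $B=W^{-1}$ doing the real work --- is indeed the standard one, and it is what \cite{MAP} (following \emph{Formal Knot Theory}) carries out. So the plan is the right plan. The problem is that your proposal is a plan and not a proof: the entire mathematical content of the theorem lives in the local cancellations for $R1$, $R2$, $R3$, and none of them is actually computed. Worse, in the two places where you do commit to specifics, the details are wrong. For $R1$: the curl creates a monogon region that is adjacent to \emph{only} the new crossing, so in any state (a bijection between regions and crossings respecting adjacency) that region is forced onto that crossing --- there is no ``assigns or does not'' dichotomy and no $W+B$ cancellation. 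The correct argument is that the forced assignment multiplies every state weight by the single quadrant label inside the curl, and one must check from Figure \ref{fig:weights} that this label is $1$ for each of the oriented curl types (or becomes $1$ after $B=W^{-1}$). For $R2$ on a \emph{connected} diagram: two crossings and \emph{two} new regions are created (the bigon, plus the splitting of the region between the two strands), not one; the ``two crossings, one region'' count is precisely the split-connecting case that the paper excludes via Lemma \ref{lm:split}, and it is the reason $\Omega$ fails to be invariant for split diagrams. Getting this count right matters, because the $R2$ cancellation is an exact matching in which the bigon is assigned to one of the two new crossings and the second new crossing reaches into an adjacent old region, and the signs and $WB=1$ identities must be checked against the actual quadrant labels.

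A secondary point: your appeal to Proposition \ref{prop:obstruction} to guarantee that admissibility is preserved is circular in spirit --- that proposition is itself a consequence of Euler's formula applied to connected diagrams, and the relevant fact here is simply the crossing/region bookkeeping for each move, which you need to do correctly anyway (see the $R2$ issue above). If you want to turn this outline into a proof, the work is: fix the convention of Figure \ref{fig:weights}, enumerate the local states for each oriented version of $R1$, $R2$, and $R3$ (the $R3$ case genuinely requires a case-by-case weight-preserving bijection, as you anticipate), and verify the identities after $B=W^{-1}$. As it stands, the proposal identifies the right skeleton but supplies none of the verification and misstates the two easier cases.
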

A proof of Theorem \ref{thm:invariance} is given in \cite{MAP}. 

\begin{example}\label{ex:noneqvknotoids}
Consider again the starred knotoids $K_1$ and $K_2$ from Example \ref{ex:one}. Then from their potentials we find

\begin{align*}
    &\nabla_{K_1}(W) = -W^3 + W -W^{-1} + W^{-3} + W^{-4} ,\\
    &\nabla_{K_2}(W) = W^4 - W^3 + W - W^{-1} + W^{-3} .
\end{align*}

Hence we find that $\nabla_{K_1}(W)\neq \nabla_{K_2}(W)$, so that $K_1$ and $K_2$ are inequivalent by Theorem \ref{thm:invariance}.

\end{example}


Notice that $\nabla_{K_2}(W,B)$ in Example \ref{ex:one} can be obtained from $\nabla_{K_1}(W,B)$ by making the substitution $(W,B)\leftrightarrow(-B,-W)$. This symmetry has been observed in many other examples of pairs of starred knotoid diagrams in which the regions containing either endpoint is starred. Accordingly, the following conjecture was given in \cite{MAP}, which remains open at the time of writing:

\begin{conjecture}\label{conj:conj1}\cite{MAP}
Let $K$ be a knotoid diagram, and $K_1$, $K_2$ be two starred knotoid diagrams obtained from $K$ by placing a star in the region of $K$ incident to the tail and the head of $K$, respectively.
Then
$$\nabla_{K_{1}}(W, B) = \nabla_{K_{2}} (-B, -W).$$
\end{conjecture}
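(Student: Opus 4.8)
The plan is to exhibit an explicit weight-preserving bijection between the states of $K_1$ and the states of $K_2$ that realizes the substitution $(W,B)\leftrightarrow(-B,-W)$ on weights. The two diagrams $K_1$ and $K_2$ share the same underlying universe $U_K$: they differ only in which of two regions (the one incident to the tail versus the one incident to the head) carries the star. Since starring a region simply means that region cannot be selected by any state, a state of $K_1$ is a bijection between crossings and all regions \emph{except} the tail-region, and a state of $K_2$ is a bijection between crossings and all regions except the head-region. Thus both state sets are bijections from the crossing set to the same collection of $n$ regions, but with two different regions excluded. The first step is therefore to set up the combinatorial correspondence at the level of which region is assigned to which crossing.

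The key idea I would pursue is that passing from the tail-starred to the head-starred convention should correspond to \emph{reversing the orientation} of the knotoidal strand, or equivalently to running the single open component from head to tail instead of tail to head. Under orientation reversal every crossing's local picture is rotated, which exchanges the positive and negative labelling conventions in Figure \ref{fig:weights}; tracking the effect on the four quadrant labels $W,B,-W,-B$ is exactly what should produce the swap $W\leftrightarrow -B$, $B\leftrightarrow -W$. Concretely, I would (i) fix the bijection between states induced by reversing the strand (which relabels crossings as positive/negative and permutes the quadrant roles), (ii) verify crossing-by-crossing that the local weight $w_s(c)$ of a state $s$ of $K_1$ equals the image under $(W,B)\mapsto(-B,-W)$ of the local weight of the corresponding state of $K_2$, and (iii) conclude that the full products $\langle K_1\mid s\rangle$ and $\langle K_2\mid s'\rangle$ match after the substitution, so that summing over all states yields $\nabla_{K_1}(W,B)=\nabla_{K_2}(-B,-W)$.

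The main obstacle, and where the real content lies, is justifying that orientation reversal genuinely exchanges the tail-region and head-region as the excluded (starred) region while giving a clean bijection on states. Reversing orientation swaps the tail and head endpoints, so the region incident to the old tail becomes incident to the new head; this is what aligns the starred region of $K_1$ with that of $K_2$. However, orientation reversal is not an identity operation on the diagram's combinatorics: it changes the sign of every crossing (turning the positive-crossing weight pattern into the negative one and vice versa), and one must check that the induced sign changes assemble exactly into the global substitution rather than introducing an extra overall factor of $(-1)^{\text{(number of crossings)}}$ or a stray monomial $\pm W^m$. I expect the delicate bookkeeping to be showing that these local sign factors cancel globally, using the fact that $n$ (the number of crossings) equals $f$ (the number of regions) by admissibility and that the relevant counts of positive versus negative quadrant-labels in each state are constrained by the permutation structure of the state. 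If a residual global sign or power of $W$ appears, I would need to absorb it by a parity or writhe argument, and pinning down that this residual factor is trivial is the crux of the proof.
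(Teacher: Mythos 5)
There is a genuine gap, and it sits exactly where you flag ``the main obstacle.'' Your steps (i)--(iii) --- reversing the orientation of the strand and tracking the effect on the quadrant labels of Figure \ref{fig:weights} --- prove only the two-variable analogue of Lemma \ref{lm:rev}: for any \emph{fixed} starred diagram $L$ one has $\nabla_L(W,B)=\nabla_{-L}(-B,-W)$. That is a statement about a single diagram with a single fixed starred region; the state sets on both sides are literally identical, because reversing orientation changes neither the universe, nor the regions, nor which region is starred --- it only changes the labels. What it does \emph{not} do is move the star: reversing the orientation of $K_1$ (star in the tail region of $K$) produces the \emph{head}-starred diagram of $-K$, i.e.\ $-(K_1)=(-K)_2$, not $K_2$. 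Feeding this into the conjecture shows your reduction transforms the claim into the equivalent statement $\nabla_{K_1}(W,B)=\nabla_{(-K)_1}(W,B)$, a comparison of two genuinely different admissible diagrams whose starred (hence excluded) regions differ, whose state sets are different collections of bijections with no canonical correspondence between them, and to which no crossing-by-crossing bookkeeping applies. This is the entire content of the conjecture; your proposal relocates it rather than resolving it, and the fallback you suggest (absorbing a residual sign by a parity or writhe argument) does not help, because the obstruction is not a stray global factor but the absence of any state bijection to compare in the first place.

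For calibration: the paper does not prove the two-variable statement at all --- it is explicitly left open --- and establishes only the specialization $B=W^{-1}$ (Theorem \ref{thm:knotoids}). The argument there is global rather than local: the difference $\delta_K(W)=\nabla_{D^t}(W)-\nabla_{D^h}(-W^{-1})$ satisfies the skein relation at non-separating crossings (Theorem \ref{thm:skein}); every uni-linkoid reduces to the skein-module generators $G_n$ (Theorem \ref{thm:skein_module}) using only such crossings, which requires Lemma \ref{lm:separating} and a descending-diagram induction; and for the generators the identity holds because $G_n$ admits an actual isotopy of $S^2$ (pushing the encircling arcs past $\infty$) carrying $-(G_n^t)$ to $G_n^h$, after which Lemma \ref{lm:rev} --- the one piece your proposal does capture --- finishes the computation. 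None of this machinery is available for $\nabla(W,B)$: the two-variable potential is not a Reidemeister invariant and satisfies no skein relation, which is precisely why the conjecture as stated remains open. If you want a provable target, restrict your orientation-reversal argument to the generators $G_n$, where the extra spherical symmetry genuinely identifies the two starred diagrams, and supply the skein reduction for everything else.
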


We prove this conjecture for the case of the mock Alexander polynomial, i.e.~when $B=W^{-1}$, in section \ref{sec:knotoids}.

To compute the potential of an admissible diagram, a priori one must enumerate its states, and there is no clear way of doing so systematically. To ease example computations from here on out we note an alternative method of computing $\nabla_L(W,B)$, namely as the permanent of a matrix. 

\begin{definition}\normalfont
    
Let $L$ be an admissible starred linkoid diagram with $n \geq 1$ crossings and regions without stars which are enumerated by $\{1, 2, ..., n\}$. The \textit{potential matrix} $M_L$ of $L$ is the $n \times n$ matrix whose $ij^{th}$ entry is the local weight placed in the $j$-th region by the $i$-th crossing in accordance with Figure \ref{fig:weights}, or is zero if region $j$ is not incident to crossing $i$. If the $i$-th crossing has multiple adjacent quandrants lying in the $j$-th region, then the $ij$-th entry of $M_L$ is the sum of corresponding local weights.
\end{definition}

\begin{proposition}\label{prop:perm}\cite{MAP}
Let $L$ be a starred link or linkoid diagram in a surface, with $n \geq 1$ crossings and regions.
The potential $\nabla_{L}(W,B)$ of $L$ is equal to the permanent of the potential matrix $M_L$ of $L$. That is, $\nabla_{L}(W)=  \text{Perm}(M_{L})$.
\end{proposition}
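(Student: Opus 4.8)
The plan is to match the defining sum of the permanent against the state sum term by term, with the only real care needed being the bookkeeping for crossings that meet a given region in more than one quadrant. Recall that for an $n \times n$ matrix the permanent is $\text{Perm}(M_L) = \sum_{\sigma \in S_n} \prod_{i=1}^n (M_L)_{i,\sigma(i)}$, where $S_n$ is the symmetric group on $\{1,\dots,n\}$; admissibility of $L$ guarantees that $M_L$ is square, so this is well-defined. Since $(M_L)_{ij} = 0$ whenever region $j$ is not incident to crossing $i$, a permutation $\sigma$ contributes a nonzero product only when $\sigma(i)$ is a region incident to crossing $i$ for every $i$. In other words, the only surviving terms come from permutations whose assignment $i \mapsto \sigma(i)$ is a bijection between crossings and regions sending each crossing to one of its adjacent regions.

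First I would set up the correspondence between states and the data indexing the permanent. A state $s$ is precisely a choice, for each crossing $i$, of an adjacent quadrant, such that the induced map sending $i$ to the region containing that quadrant is a bijection onto the set of regions. Thus each state $s$ determines a permutation $\sigma_s \in S_n$ (its underlying region-to-crossing bijection) together with, for each $i$, a choice of quadrant of crossing $i$ lying in region $\sigma_s(i)$; conversely every pair consisting of such a valid permutation and a compatible system of quadrant choices is a state.

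Next I would expand the permanent via the distributive law and identify the pieces. By definition $(M_L)_{i,\sigma(i)}$ is the sum over all quadrants of crossing $i$ lying in region $\sigma(i)$ of the corresponding local weights from Figure~\ref{fig:weights}. Distributing the product $\prod_i (M_L)_{i,\sigma(i)}$ over these sums yields one monomial for each system of quadrant choices, i.e.~one monomial for each state $s$ with $\sigma_s = \sigma$, and that monomial is exactly $\prod_{i} w_s(i) = \langle L \vert s \rangle$. Summing first over quadrant choices for fixed $\sigma$ and then over all $\sigma$ reorganizes $\text{Perm}(M_L)$ as $\sum_{s \in \mathcal{S}(L)} \langle L \vert s \rangle = \nabla_L(W,B)$, as claimed.

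I expect the main obstacle to be purely a matter of careful bookkeeping rather than conceptual difficulty: verifying that the ``sum of local weights'' prescription for $(M_L)_{ij}$ when a crossing meets region $j$ in several quadrants is exactly what makes the distributive expansion reproduce the distinct states (which differ only in the chosen quadrant) as distinct monomials, with no over- or undercounting. I would check the degenerate configurations explicitly, namely crossings adjacent to an endpoint of a knotoidal component, nugatory crossings, and starred crossings or regions (which are excluded from the enumeration and so contribute neither a row nor a column), to confirm that the quadrant-level data of a state is faithfully encoded by the entrywise sums and that every permutation counted in the permanent arises from a genuine state.
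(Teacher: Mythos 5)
Your argument is correct and is essentially the standard one: the paper itself gives no proof of this proposition (it is quoted from \cite{MAP}), but the identification there is exactly the term-by-term matching you describe, with nonzero permutation terms in $\mathrm{Perm}(M_L)$ corresponding to region--crossing bijections and the distributive expansion of the summed entries accounting for the distinct quadrant choices. Your attention to the multi-quadrant entries is precisely the right point of care, so nothing is missing.
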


\begin{corollary}
The mock Alexander polynomial can also be obtained as the permanent of a matrix, namely $\nabla_L(W) = \text{Perm} \left( M_L \vert_{B=W^{-1}} \right) $.
\end{corollary}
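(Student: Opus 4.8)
The plan is to obtain this immediately by combining Proposition~\ref{prop:perm} with Definition~\ref{def:MAP}; the only substantive point is that the specialization commutes with the permanent. First I would invoke Definition~\ref{def:MAP}, which sets $\nabla_L(W) = \nabla_L(W, W^{-1})$, i.e.\ the mock Alexander polynomial is the potential specialized at $B = W^{-1}$. Next I would apply Proposition~\ref{prop:perm}, which identifies the two-variable potential with a permanent, $\nabla_L(W, B) = \text{Perm}(M_L)$, so that $\nabla_L(W) = \text{Perm}(M_L)\vert_{B = W^{-1}}$.

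It then remains to interchange the specialization $B = W^{-1}$ with the formation of the permanent. I would argue this as follows: the assignment $B \mapsto W^{-1}$ (fixing $W$) extends to a ring homomorphism $\varphi \colon \mathbb{Z}[W, B] \to \mathbb{Z}[W, W^{-1}]$, and the permanent of an $n \times n$ matrix, $\text{Perm}(M) = \sum_{\sigma \in S_n} \prod_{i=1}^n M_{i\sigma(i)}$, is a universal polynomial in the entries, built only from addition and multiplication. Since $\varphi$ respects both operations, it commutes with the permanent: $\varphi(\text{Perm}(M_L)) = \text{Perm}(\varphi(M_L))$, where $\varphi(M_L)$ denotes $M_L$ with $\varphi$ applied entrywise, i.e.\ $M_L\vert_{B = W^{-1}}$. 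Chaining these equalities yields $\nabla_L(W) = \text{Perm}(M_L\vert_{B = W^{-1}})$, as claimed.

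I do not expect any genuine obstacle here: essentially all the content is supplied by Proposition~\ref{prop:perm}, and the remaining commutation is automatic precisely because the permanent is polynomial in the matrix entries, so that specializing before or after computing it gives the same result. Accordingly I would keep the proof to these two short steps, and would be comfortable compressing them into a single sentence once the reader has Proposition~\ref{prop:perm} and Definition~\ref{def:MAP} in hand.
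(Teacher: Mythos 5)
Your proposal is correct and matches the paper's (implicit) reasoning exactly: the corollary is stated without proof precisely because it follows immediately from Proposition~\ref{prop:perm} and Definition~\ref{def:MAP}, with the specialization $B = W^{-1}$ commuting with the permanent since the latter is a polynomial in the matrix entries. Your explicit justification via the ring homomorphism is a correct elaboration of this one-line observation.
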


\begin{example}
Let $K$ be the knotoid diagram depicted in Figure \ref{fig:combex}, so that $K_1$ and $K_2$ from Example \ref{ex:one} are obtained from starring the regions $A$ and $E$ respectively. Then the matrix $M$ containing the local weights of $K$ is given in Figure \ref{fig:combex}, and $M_{K_1}$ and $M_{K_2}$ are obtained from $M$ by deleting the first and the fifth column, respectively. We compute that $\text{Perm}(M_{K_1})=-W^3 + W^2B -WB^2 + B^3 + B^4$ and $\text{Perm}(M_{K_2})=W^4 - W^3 + W^2B - WB^2 + B^3$, and these indeed agree with the findings of Example \ref{ex:one}.

\end{example}

\begin{figure}[ht]
\centering
\includegraphics[width=.7\textwidth]{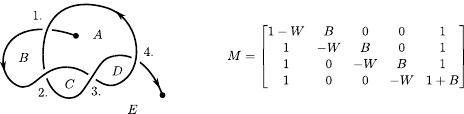}
\caption{The weight matrix of $K$, from which the potential matrices of $K_1$ and $K_2$ are obtained by deleting one column.}
\label{fig:combex}
\end{figure}

\subsection{The mock Alexander polynomial for generalized knotoids}

To study linkoids in Section \ref{sec:linkoids} we will use generalized knotoids and their mock Alexander polynomials. For the purpose of defining a state sum for (starred) generalized knotoid diagrams we interpret a star in a starred diagram as a degree zero or four node of a generalized knotoid, recalling Remark \ref{rk:star_gen}.
Recalling Example \ref{ex:planar_linkoids} we can also interpret the point at infinity of a diagram on the plane as a degree zero node in a generalized knotoid, so that it suffices to consider the mock Alexander polynomial of generalized knotoids on the sphere since we can add a degree zero node at $\infty\in S^2$ to derive a mock Alexander polynomial for planar diagrams.


\begin{definition}
Let $G$ be a connected generalized knotoid diagram, $n$ the number of crossings in $G$, and $f_d$ the number of regions in $G$ that are homeomorphic to a disk $D^2$. Note here that since $G$ is connected and spherical, a region is homeomorphic to a disk if and only if it contains no degree zero nodes. We say $G$ is \textit{admissible} if $n=f_d$. The \textit{generalized obstruction} of $G$ is defined by $\Omega_g(G)=n-f_d$. A \textit{state} of a generalized knotoid is a bijection assigning an adjacent disk-shaped region to each crossing of $G$. The \textit{potential} and \textit{mock Alexander polynomial} of $G$ are defined by the expressions in Definitions \ref{def:potential} and \ref{def:MAP}, as before.
\end{definition}

As for starred linkoids, the mock Alexander polynomial of a generalized knotoid can be computed as the permanent of the potential matrix that is the matrix of weights given by crossings and regions without degree zero nodes.

\begin{example}
Let $G$ be the generalized knotoid diagram depicted in Figure \ref{fig:generalized_polynomial}. Then $G$ is admissible as $n=f_d=3$, and has eight states. For the potential and mock Alexander polynomial of $G$ we find
\begin{align*}
    & \nabla_{G}(W,B) = W^2 - WB + 2W - 2B + 2,\\
    & \nabla_{G}(W) = W^2 + 2W - 2W^{-1} + 1.\\
\end{align*}
\end{example}

\begin{figure}[ht]
    \centering
    \includegraphics[width=0.38\linewidth]{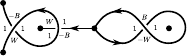}
    \caption{An admissible generalized knotoid. Crossings labelled with local weights.}
    \label{fig:generalized_polynomial}
\end{figure}

Our reason for introducing generalized knotoids is on the one hand that planar and spherical linkoids form sub-classes of generalized knotoids, while simultaneously generalized knotoids can be easily augmented to control $\Omega_g$ by the following Lemma:

\begin{lemma}\label{lm:edges}
Let $G$ be a connected generalized knotoid diagram and let $\tilde{G}$ be obtained from $G$ by adding an edge between two nodes of nonzero degree. Then $\Omega_g(\tilde{G})=\Omega_g(G)-1$.
\end{lemma}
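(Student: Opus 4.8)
The plan is to track how adding an edge between two nonzero-degree nodes changes the three quantities in Euler's formula, and thereby its effect on $\Omega_g = n - f_d$. Since $G$ is connected and spherical, I will work with the universe $U_G$ as a connected planar graph embedded on $S^2$ and apply $v - e + f = 2$ throughout, exactly as in the derivation of Proposition \ref{prop:obstruction}. The number of crossings $n$ does not change when we add an edge between two existing nodes (no new double points are introduced), so the entire change in $\Omega_g$ comes from the change in $f_d$, the number of disk-shaped regions.

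First I would set up the bookkeeping. Let $v, e, f$ denote the vertex, edge, and face counts of $U_G$. Adding a single edge $\varepsilon$ between two nodes $p, q$ of nonzero degree adds one edge ($e \mapsto e+1$) and no vertices ($v \mapsto v$), since $p$ and $q$ already appear as vertices of $U_G$ (their degree is nonzero). By Euler's formula the face count must increase by one: $f \mapsto f+1$. The new edge $\varepsilon$ splits exactly one region $R$ of $G$ into two regions. The crux is then to argue that this operation increases the count of \emph{disk-shaped} regions by exactly one, i.e.~$f_d \mapsto f_d + 1$, which immediately yields $\Omega_g(\tilde G) = n - (f_d+1) = \Omega_g(G) - 1$.

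The main obstacle, and the step requiring the most care, is the region-counting argument: I must verify that the net gain in disk-shaped regions is exactly one, not zero or two. The new edge $\varepsilon$ must be drawn inside a single region $R$ of $G$ that is incident to both $p$ and $q$ (such a region exists and we may choose $\varepsilon$ within it; since the diagram is connected on $S^2$ one can route the edge appropriately). Recall from the definition that a region is disk-shaped precisely when it contains no degree-zero nodes. The key observation is that adding $\varepsilon$ does not create or destroy any degree-zero nodes, and the two new regions $R', R''$ obtained by cutting $R$ along $\varepsilon$ together contain exactly the degree-zero nodes that $R$ contained. I would argue that, choosing $\varepsilon$ suitably, $R$ splits into two regions at least one of which is a disk, and handle the count by cases according to whether $R$ was disk-shaped. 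If $R$ was a disk (no degree-zero nodes), both $R'$ and $R''$ are disks, so $f_d$ increases by two while $f$ increases by one --- this would give the wrong answer, so I must rule this situation out or reconcile it.

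Reconciling this is where the argument must be made precise: the resolution is that when $R$ is already a disk, adding a chord between two boundary nodes indeed produces two disks, so one needs the honest accounting $\Delta f_d = (\text{disks created}) - (\text{disks destroyed})$. In that case one disk $R$ is destroyed and two are created, giving $\Delta f_d = +1$, consistent with the claim; whereas if $R$ was not a disk (contained some degree-zero nodes), the chord $\varepsilon$ separates it into $R'$ and $R''$ whose degree-zero nodes partition those of $R$, and I would show exactly one of $R', R''$ can be taken to be a disk while the other inherits all the degree-zero nodes, so one non-disk is destroyed and one disk is created, again giving $\Delta f_d = +1$. Either way $f_d$ increases by one, and since $n$ is unchanged we conclude $\Omega_g(\tilde G) = \Omega_g(G) - 1$. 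The delicate point throughout is justifying the claim about how degree-zero nodes distribute between $R'$ and $R''$ and that the edge can always be routed to realize the count, which I would support with a short planarity/Jordan-curve argument on $S^2$.
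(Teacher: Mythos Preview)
Your argument has a genuine gap: you assert that adding the edge introduces no new crossings and that it can be drawn inside a single region incident to both nodes. Neither claim holds in general. Edges in a generalized knotoid diagram are immersed, so the added edge may well cross existing arcs---this is exactly what happens in the shadow and theta closures of Section~\ref{sec:linkoids}, which is what the lemma is meant to cover. And two nonzero-degree nodes need not share a face: the two endpoints of a typical knotoid diagram lie in distinct regions, so there is no embedded arc joining them in the complement of the universe. Since the lemma is stated for an arbitrary $\tilde G$ obtained by adding an edge, you are not free to choose a convenient routing of $\varepsilon$ after the fact.

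The paper's proof instead lets the connecting arc make $k\geq 0$ transverse intersections with the existing arcs. This adds $k$ new crossings (degree-four vertices), and Euler's formula then forces $k+1$ new regions, giving $\Omega_g(\tilde G)=(n+k)-(f+k+1)=\Omega_g(G)-1$ for every $k$ at once. Your case analysis distinguishing $f$ from $f_d$---tracking whether the region being split was already a disk---is actually a point the paper's own proof glosses over (it silently identifies the total face count with $f_d$); but to make your approach sound you would first need to allow the $k$ new crossings and then redo that disk-counting in the general setting.
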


\begin{proof}
Let $G$ have $n$ crossings and $f$ faces, and $v$ and $w$ be two vertices of $G$ of degree greater than zero. Pick an arc embedded in $\mathbb{R}^2$ that connects $v$ and $w$, and suppose there are $k \geq 0$ arcs that the connecting arc crosses during the connection. It is clear that the number of crossings in the generalized knotoid diagram $\tilde{G}$ obtained after connecting $v$ and $w$ with the chosen arc is $k$ more than the number of crossings in $G$. Each crossing is counted as a degree four vertex. Then, the number of edges of $\tilde{G}$ is $k+1$ more than the number of edges in $G$. It follows by Euler's formula for planar graphs that the number of regions in $\tilde{G}$ is $k+1$ more than the number of regions in $G$. 
Thus we find
\[
    \Omega_g(\tilde{G})= (n+k)- (f+k+1)= \Omega_g(G)-1.
\]
\end{proof}

Interpreting planar linkoids as starred linkoids and therefore in turn as generalized knotoids as in Section \ref{subsec:generalized}, we can rephrase Proposition \ref{prop:obstruction} in terms of $\Omega_g$. In Section \ref{sec:linkoids} we will interpret all linkoids as generalized knotoid diagrams, and therefore use this generalized obstruction formula.

\begin{proposition}\label{prop:obstruction_g}
Let $L$ be a connected $(\kappa,\ell)$-linkoid diagram. Then
\[
\Omega_g(L) = 
\begin{cases}
\kappa - 2 & \qquad \text{if $L$ is spherical,}\\
\kappa - 1 & \qquad \text{if $L$ is planar.}
\end{cases}
\]
\end{proposition}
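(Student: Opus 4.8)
The plan is to reduce both cases to Proposition \ref{prop:obstruction}, which computes the ordinary obstruction $\Omega(L) = n - f = \kappa - 2$ for a connected spherical linkoid diagram, and then to account for the difference between $\Omega$ (which counts all regions $f$) and $\Omega_g$ (which counts only the disk-shaped regions $f_d$). The key input is the stated criterion that, for a connected spherical generalized knotoid diagram, a region fails to be a disk precisely when it contains a degree-zero node.

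First I would dispose of the spherical case. A plain spherical $(\kappa,\ell)$-linkoid diagram, viewed as a generalized knotoid, has only nodes of degree one (endpoints) and degree four (crossings), and in particular no degree-zero nodes. By the disk criterion every region is therefore a disk, so $f_d = f$ and hence $\Omega_g(L) = n - f = \Omega(L)$. Proposition \ref{prop:obstruction} then gives $\Omega_g(L) = \kappa - 2$ immediately. One may also verify this directly from Euler's formula $v - e + f = 2$, counting $v = n + 2\kappa$ vertices and, via the degree sum $4n + 2\kappa$, exactly $e = 2n + \kappa$ edges; the loop components contribute only through their crossings and so do not alter the count.

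For the planar case I would use the interpretation from Example \ref{ex:planar_linkoids} and the surrounding discussion: a planar $(\kappa,\ell)$-linkoid is the same underlying diagram regarded on $S^2$ together with an added degree-zero node placed at $\infty \in S^2$, lying in what was the exterior region of $\mathbb{R}^2$. Viewed simply as a spherical linkoid, this underlying diagram has $n - f = \kappa - 2$ by Proposition \ref{prop:obstruction}, with all $f$ regions disks. Adjoining the isolated degree-zero node at $\infty$ changes neither the number of crossings $n$ nor the number of regions $f$, since an isolated vertex subdivides nothing; but by the disk criterion it turns exactly one region, the one containing $\infty$, into a non-disk region. Hence $f_d = f - 1$, and therefore $\Omega_g(L) = n - (f-1) = (n - f) + 1 = (\kappa - 2) + 1 = \kappa - 1$, as claimed.

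The argument is essentially bookkeeping, so I expect no serious obstacle; the only point demanding care is the precise relationship between the region counts in the two settings. Specifically, I must confirm that passing from $\mathbb{R}^2$ to $S^2$ leaves both $n$ and $f$ unchanged (the unbounded planar region becomes the bounded region containing $\infty$) and that the added degree-zero node renders exactly one region non-disk rather than altering the combinatorics of the universe itself. Once this is pinned down, both equalities follow directly from Proposition \ref{prop:obstruction} together with the disk criterion.
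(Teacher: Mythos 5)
Your proof is correct and follows essentially the same route as the paper: the spherical case reduces to $\Omega_g(L)=\Omega(L)$ via Proposition \ref{prop:obstruction}, and the planar case loses exactly one disk-shaped region to the degree-zero node at infinity, giving $\kappa-1$. The paper's own proof is a two-line version of precisely this bookkeeping.
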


\begin{proof}
This follows from Proposition \ref{prop:obstruction} since $\Omega_g(L)=\Omega(L)$ if $L$ is spherical, and since any linkoid diagram in $\mathbb{R}^2$, seen as a generalized knotoid, has one fewer disk-shaped region than its corresponding diagram on $S^2$ due to the degree zero node at infinity.
\end{proof}

\subsection{The skein relation}

Aside from the potential matrix, another useful tool for computing mock Alexander polynomials is the \textit{Conway skein relation}. It works by relating the polynomials of three links that differ from each other only at a single crossing. This relation for the classical Alexander polynomial was discovered in \cite{conway1970enumeration} for classical knots and links. For the mock Alexander polynomial the skein relation was also shown to hold in \cite{MAP}, but only at selected crossings. We recall the relevant results for linkoids here.

\begin{definition}
Let $L$ be a linkoid diagram and $c$ a crossing of $L$. We say that $c$ is a \textit{separating} crossing if one of the two possible (unoriented) smoothings of $c$ results in a split diagram. (Recall Definition \ref{def:split}.)
\end{definition}

\begin{theorem}\label{thm:skein}\cite{MAP}
Let $L_+$, $L_-$, and $L_0$ be three starred linkoid diagrams that differ from each other only in a small disk as is depicted in Figure \ref{fig:skein_relation}. Let $c$ be the crossing in $L_+$ (or in $L_-$) contained in this disk. Then if $c$ is not separating, the \textit{skein relation}
\begin{equation}\label{eq:skein}
    \nabla_{L_+}(W) - \nabla_{L_-}(W) = (W-W^{-1}) \nabla_{L_0}(W)
\end{equation}
holds.
\end{theorem}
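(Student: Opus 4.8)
The plan is to prove the two-variable identity
$\nabla_{L_+}(W,B) - \nabla_{L_-}(W,B) = (W-B)\,\nabla_{L_0}(W,B)$
at the level of potentials and then specialize via the substitution $B = W^{-1}$, working throughout with the permanent formulation of Proposition \ref{prop:perm}. First I would observe that $L_+$ and $L_-$ have \emph{identical universes}: switching the over/under data at $c$ does not alter the underlying $4$-valent graph $U_L$, its regions, or its stars. Consequently the potential matrices $M_{L_+}$ and $M_{L_-}$, indexed by the unstarred crossings as rows and the unstarred regions as columns, agree in every entry except possibly in the single row corresponding to $c$, where the four quadrant weights of Figure \ref{fig:weights} enter. (Note that $c$ and its selectable quadrants are necessarily unstarred, so they genuinely participate in the permanent.)

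Expanding both permanents along the row of $c$ — using that the permanent is multilinear in its rows, with \emph{no} accompanying signs — gives $\nabla_{L_\pm} = \sum_{j} (M_{L_\pm})_{cj}\,D_j$, where $D_j := \text{Perm}(M^{\hat c,\hat j})$ is the permanent of the minor obtained by deleting the row of $c$ and the column of region $j$. Subtracting, only the columns $j$ incident to $c$ survive, and everything reduces to the \emph{local} computation of the differences $(M_{L_+})_{cj} - (M_{L_-})_{cj}$ read off Figure \ref{fig:weights}. The key local fact I would record is that a sign switch at $c$ leaves the weights in the two quadrants of one diagonal unchanged, while in the two quadrants of the other diagonal each weight increases by exactly $W - B$; moreover the orientation conventions are arranged so that this latter diagonal consists of precisely the two quadrant-regions, say $P$ and $Q$, that are \emph{merged} by the oriented smoothing producing $L_0$. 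Hence $\nabla_{L_+} - \nabla_{L_-} = (W-B)(D_P + D_Q)$.

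It then remains to identify $D_P + D_Q$ with $\nabla_{L_0}$. For this I would analyse the universe of $L_0$: replacing the $4$-valent vertex $c$ by the oriented smoothing deletes $c$ and fuses the two regions $P,Q$ into a single region $[PQ]$, leaving all other crossings, regions, and stars untouched. Thus the $[PQ]$-column of $M_{L_0}$ is exactly the sum of the $P$-column and the $Q$-column of $M_{L_\pm}$ restricted to the remaining rows, and $L_0$ has one fewer crossing and one fewer region than $L_\pm$, so $\Omega_\star(L_0)=\Omega_\star(L_\pm)=0$ and $L_0$ is again admissible. Multilinearity of the permanent in the $[PQ]$-column now splits $\text{Perm}(M_{L_0})$ as the sum of the two minors $D_Q$ and $D_P$, giving $\nabla_{L_0} = D_P + D_Q$ and hence the two-variable skein identity; substituting $B = W^{-1}$ yields the stated relation.

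The step I expect to be the main obstacle is making this smoothing analysis rigorous, and it is exactly here that the non-separating hypothesis is needed. I must check (i) that $L_0$ is connected, so that $\nabla_{L_0}$ is defined at all — this holds because $c$ non-separating means neither smoothing, and in particular the oriented one producing $L_0$, yields a split diagram; and (ii) that $P$ and $Q$ are genuinely distinct regions of $L_+$, so that the smoothing really \emph{lowers} the region count by one and the column-fusion argument is valid. Were $P = Q$ a single global region, an arc joining its two sectors at $c$ through the crossing would exhibit a simple closed curve meeting the diagram only at $c$, forcing the oriented smoothing to split and again contradicting the hypothesis. A secondary point to handle with care is the possibility that other quadrants of $c$ coincide as global regions (bigons, monogons, or quadrants abutting an endpoint or star), in which case the relevant matrix entries become sums of quadrant weights; I would note that the same row- and column-multilinearity bookkeeping absorbs these coincidences, and that the only coincidence capable of breaking the argument, namely $P = Q$, is precisely the one excluded by non-separability.
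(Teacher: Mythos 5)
The paper does not actually prove Theorem \ref{thm:skein}: it is imported verbatim from \cite{MAP}, so there is no in-text argument to compare against. Your proposal supplies the standard Formal Knot Theory proof --- row-multilinearity of the permanent at the row of $c$, identification of the row difference as $(W-B)$ supported on the two quadrant-columns merged by the oriented smoothing, and column-multilinearity to reassemble $D_P+D_Q$ as $\mathrm{Perm}(M_{L_0})$ --- and this is sound; it is also, as far as the structure of \cite{kauffman1983formal} and \cite{MAP} indicates, essentially the intended argument. Your use of the non-separating hypothesis is the right one and is handled correctly on both points where it is needed: connectedness of $L_0$ (so that $\nabla_{L_0}$ is defined at all) and the genuine distinctness of the two merged regions $P\neq Q$, via the simple closed curve through $c$ that would otherwise split the oriented smoothing. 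Two small caveats. First, the ``key local fact'' is entirely figure-dependent; your argument is only as good as the claim that the crossing switch perturbs exactly the two smoothing-merged quadrants, each by the \emph{same} quantity $W-B$ (a relative sign between the two perturbed quadrants would break $D_P+D_Q$), so this should be checked explicitly against Figure \ref{fig:weights} rather than asserted. Second, your parenthetical that the selectable quadrants of $c$ are ``necessarily unstarred'' is not justified by the hypotheses (the disk of Figure \ref{fig:skein_relation} excludes a star at $c$ itself, but $P$ or $Q$ may well carry a star); this does not sink the proof --- if, say, $P$ is starred then the difference collapses to $(W-B)D_Q$ while $[PQ]$ is starred in $L_0$ and $\mathrm{Perm}(M_{L_0})=D_Q$, so the identity survives --- but the case should be acknowledged rather than waved away.
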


\begin{figure}[ht]
    \centering
    \includegraphics[width=0.6\linewidth]{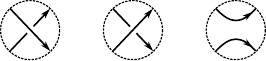}
    \caption{$L_+$, $L_-$, and $L_0$ respectively, within the disk on which they differ.}
    \label{fig:skein_relation}
\end{figure}

The skein relation can be used to reduce the complexity of diagrams by replacing a crossing with a linear combination of two, potentially simpler, diagrams. However unlike for classical links, for linkoids the skein relation cannot be used to reduce any diagram to a linear combination of trivial diagrams. Therefore the relation can be used to aid the computation of $\nabla_L(W)$, but it does not determine the invariant completely. We can also ask how far one can get using only the skein relation. This naturally leads to the following definition of \textit{skein modules}.

\begin{definition}
We call a triple $(L_+,L_-,L_0)$ of linkoids differing only on a disk as in Figure \ref{fig:skein_relation} a \textit{Conway triple}. Let $V_L$ be the free $\mathbb{Z}[W^{\pm1}]$-module generated by equivalence classes of spherical linkoids. Similarly let $V_U$ be the free $\mathbb{Z}[W^{\pm1}]$-module generated by uni-linkoids. The \textit{linkoid skein module} $\mathcal{S}_L$ of $S^2$ is defined to be the quotient of $V_L$ by the submodule generated by all vectors of the form $L_+ - L_- - (W-W^{-1}) L_0$ where $(L_+,L_-,L_0)$ is a Conway triple of linkoids. Similarly the \textit{uni-linkoid skein module} $\mathcal{S}_U$ is defined to be $V_L$ modulo the submodule generated by the same vectors, ranging over all Conway triples of uni-linkoids.
\end{definition}

Using that the Conway skein module of link diagrams in the annulus is known, the structure of $\mathcal{S}_U$ was derived in \cite{turaev2012knotoids}:

\begin{theorem}\label{thm:skein_module}\cite{turaev2012knotoids}
The uni-linkoid skein module $\mathcal{S}_U$ of $S^2$ is a free $\mathbb{Z}[W^{\pm1}]$-module with generating set $G_n$ for $n\geq 0$. Here $G_0$ is the trivial knotoid diagram, and $G_n$ for $n>0$ is the uni-linkoid diagram given by a trivial knotoidal component and a single closed component that winds around the tail of the knotoidal component $n$ times creating $n$ crossings, and then closes via an over-going arc that creates $n-1$ more crossings.
\end{theorem}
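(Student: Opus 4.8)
The plan is to realize $\mathcal{S}_U$ as, in essence, the Conway skein module of the annulus, whose freeness is already known, and to transport the basis along this identification. First I would set up the geometric dictionary. Given a uni-linkoid whose knotoidal component has tail $t$ and head $h$, I would regard the two endpoints as punctures and work in the annulus $A = S^2 \setminus \{t,h\}$. Under this view the knotoidal component becomes an arc joining the two ends of $A$, while the loop components become closed curves in $A$; the forbidden moves at $t$ and $h$ translate into the requirement that all isotopies and Reidemeister moves take place in $A$ without crossing the punctures. This turns the generators of the free module on uni-linkoids into annulus tangles of this restricted type, and the defining relations of $\mathcal{S}_U$ into the Conway relations among them.

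Next I would reduce the knotoidal component to a standard trivial arc. Because $\mathcal{S}_U$ imposes the relation $L_+ - L_- = (W-W^{-1})L_0$ at \emph{every} crossing (the module is built from all Conway triples, not merely the non-separating ones to which Theorem \ref{thm:skein} applies to the polynomial), a crossing change equates two diagrams modulo $(W-W^{-1})$ times the oriented smoothing. Since the smoothing strictly decreases the crossing number, an induction on crossing number lets me switch crossings freely at the cost of lower terms. As any arc can be unknotted by crossing changes, this shows that in $\mathcal{S}_U$ every uni-linkoid is a $\mathbb{Z}[W^{\pm 1}]$-combination of uni-linkoids whose knotoidal component is a standard radial arc, with the remaining loops forming a link in the complementary annulus.

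Then I would invoke the known structure of the Conway skein module of the annulus. With the arc standardised, the loops constitute an annulus link, and the freeness of the annulus module lets me rewrite them in its standard basis of single closed curves winding $n\geq 0$ times around the core. Reading this back through the dictionary, a curve of winding number $n$ is exactly the loop component of $G_n$: the $n$ crossings it makes with the radial arc record the winding, and the closing over-arc supplies the remaining $n-1$ self-crossings. This proves that $\{G_n\}_{n\geq 0}$ spans $\mathcal{S}_U$. For freeness I would use the total winding number $w$ of the loops about the tail as a grading: a crossing change preserves the homology class of each curve and an oriented smoothing preserves the total class in $H_1(A)$, so $w$ is constant on each Conway triple and descends to a grading of $\mathcal{S}_U$; since $w(G_n)=n$, the $G_n$ lie in distinct graded pieces. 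Together with the freeness transported from the annulus module, which guarantees each graded piece is free of rank one without relations or torsion, this gives $\mathcal{S}_U = \bigoplus_{n\geq 0} \mathbb{Z}[W^{\pm 1}]\,G_n$, free on $\{G_n\}$.

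I expect the main obstacle to be making the annulus dictionary genuinely skein-compatible. One must control the crossings between the loops and the fixed knotoidal arc, in particular the fact that an oriented smoothing at such a crossing splices a loop into the arc and so alters the knotoidal component, and one must check throughout that no reduction step silently requires an illegal move of a strand across $t$ or $h$. Pinning down the orientation conventions so that the grading accounts for exactly the range $n\geq 0$, rather than all of $\mathbb{Z}$, is a further point requiring care.
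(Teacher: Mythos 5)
The paper itself offers no proof of this statement: it is imported from Turaev with the one-line remark that it follows from the known structure of the Conway skein module of the annulus, which is also the strategy you adopt. So your overall route matches the cited one, but your execution has a genuine gap at its central step. After standardising the knotoidal arc you propose to rewrite the loops using ``the standard basis of the annulus module, single closed curves winding $n\geq 0$ times around the core.'' That is not the basis of the Conway skein module of the annulus: that module is free on descending \emph{stacks} of core-parallel curves, i.e.\ on monomials in generators indexed by the nonzero integers (both signs occurring), not on single curves. Consequently a configuration such as a trivial arc with two loops each winding once around the tail cannot be reduced to the $G_n$ by skein moves performed among the loops alone. To reach the $G_n$, which have at most one closed component, you must apply the Conway relation at crossings \emph{between a loop and the knotoidal arc}: the oriented smoothing there splices the loop into the arc, the arc must then be re-standardised, and the induction interleaves these two operations. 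This interplay is precisely what makes the statement a theorem about uni-linkoids rather than about annulus links; it is absent from your two-stage plan, and it is also what disposes of the negative winding numbers and the multi-loop basis elements that you flag but do not resolve.

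The freeness argument inherits the same problem. The natural map from the Conway module of the annulus to $\mathcal{S}_U$ (place a trivial arc over an annulus link) is a surjection with a large kernel --- generated exactly by the extra arc--loop relations above --- so freeness does not ``transport.'' Your fallback grading by the total winding number of the loops is also not well defined on $\mathcal{S}_U$ as stated: a Conway triple at an arc--loop crossing absorbs a loop into the arc, so the total winding of the \emph{loops} is not constant on such triples. One can repair this by grading by the class of the entire diagram in $H_1(S^2\setminus\{t,h\},\partial)$, but even then each graded piece must be shown to be free of rank one, which requires exhibiting an invariant of $\mathcal{S}_U$ (for instance an evaluation such as $\nabla_{G_n^t}$, or the Conway polynomial of an associated closure) that does not vanish on $G_n$; no such input appears in your argument.
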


An example of a generator of $\mathcal{S}_U$ is depicted in Figure \ref{fig:generator}.

\begin{figure}[ht]
    \centering
    \includegraphics[width=0.23\linewidth]{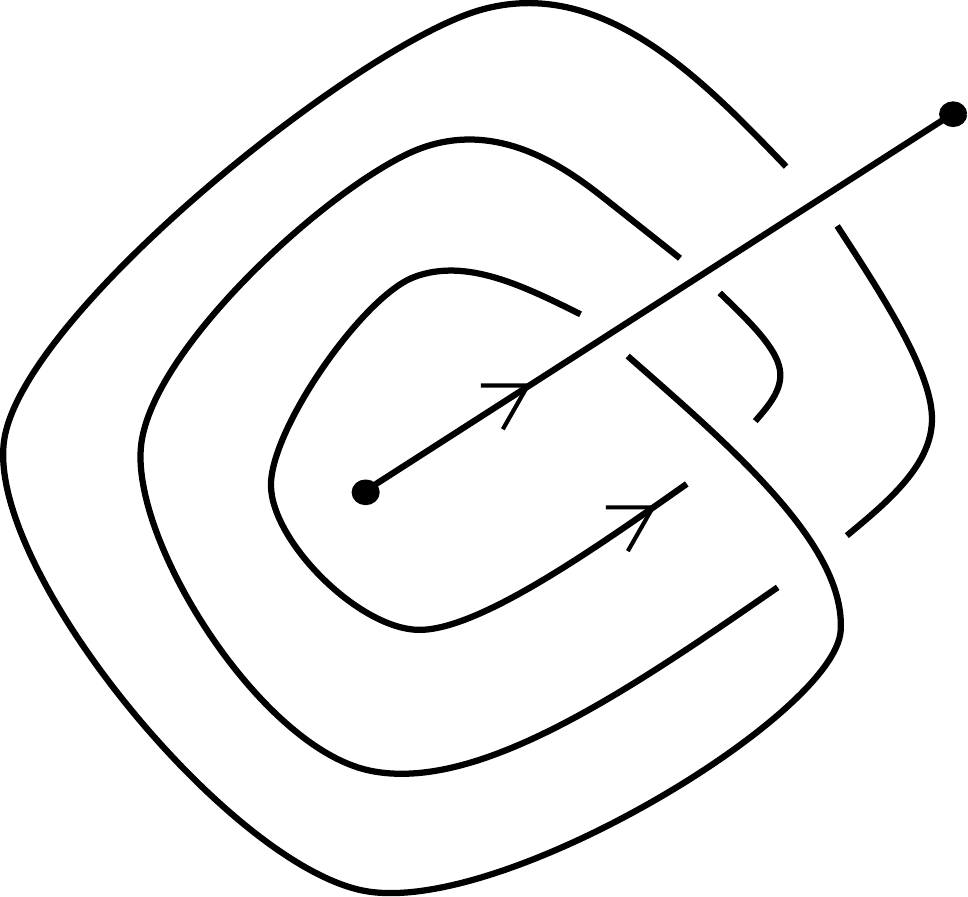}
    \caption{An example of a generator $G_n$ of $\mathcal{S}_U$, for $n=3$.}
    \label{fig:generator}
\end{figure}

\section{The mock Alexander polynomial for uni-linkoids}\label{sec:knotoids}

\subsection{Planar knotoids}

Let $K$ be a planar uni-linkoid, seen as a generalized knotoid. Then by Proposition \ref{prop:obstruction_g}, $\Omega_g(K)=0$ and any connected diagram of $K$ is admissible. As such we can compute $\nabla_K(W)$ immediately, without altering $K$, to obtain an invariant of planar knotoids and uni-linkoids:

\begin{lemma}
The mock Alexander polynomial $\nabla_K(W)$ constitutes an invariant of planar uni-linkoids, seen as starred spherical uni-linkoids with a star placed in the exterior region.
\end{lemma}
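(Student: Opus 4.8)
The plan is to obtain the lemma as a direct combination of three results already in hand, with essentially no new geometric input: the obstruction formula of Proposition \ref{prop:obstruction_g}, the identification of planar uni-linkoids with a subclass of starred spherical linkoids from Example \ref{ex:planar_linkoids}, and the general invariance statement of Theorem \ref{thm:invariance}. The task is therefore only to verify that the hypotheses of Theorem \ref{thm:invariance} hold for the relevant diagrams and that its conclusion transports correctly across the planar--spherical correspondence.

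First I would establish admissibility. Viewing $K$ as a generalized knotoid with a degree-zero node at $\infty$---equivalently a starred spherical uni-linkoid whose single star lies in the exterior region, by Example \ref{ex:planar_linkoids} and Remark \ref{rk:star_gen}---a uni-linkoid has $\kappa=1$, so Proposition \ref{prop:obstruction_g} gives $\Omega_g(K)=\kappa-1=0$. (On this class of diagrams $\Omega_g$ and the starred obstruction $\Omega_\star$ agree, since a region is disk-shaped precisely when it is unstarred.) Hence every connected diagram representing $K$ is admissible and $\nabla_K(W)$ is defined via Definition \ref{def:MAP}; by Lemma \ref{lm:split} the restriction to connected diagrams is without loss of generality.

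Next I would invoke the correspondence of Example \ref{ex:planar_linkoids}, under which planar uni-linkoids are exactly starred spherical uni-linkoids carrying one star in the exterior region, and under which the two equivalence relations match: spherical isotopy together with Reidemeister moves avoiding the star correspond to planar isotopy together with Reidemeister moves not crossing $\infty$. Since the starred spherical representative is admissible and admissibility is preserved by the Reidemeister moves, Theorem \ref{thm:invariance} guarantees that $\nabla_K(W)$ is unchanged under precisely these moves. Pulling this back across the correspondence shows that $\nabla_K(W)$ agrees on any two equivalent planar uni-linkoid diagrams, which is the assertion of the lemma.

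The one point I would scrutinize, and which I regard as the main---if modest---obstacle, is confirming that the two equivalence relations truly coincide: that a planar ambient isotopy of a compact uni-linkoid diagram extends to a spherical isotopy fixing the star, and that no permitted planar Reidemeister move covertly pushes an arc across $\infty$. Both are routine: a planar isotopy of a compact diagram may be taken to be the identity outside a large disk and hence extends to $S^2$ fixing $\infty$, while the Reidemeister moves are local and by definition never involve the exterior region. With this verification in place the lemma follows entirely from the cited results.
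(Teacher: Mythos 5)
Your proposal is correct and follows essentially the same route as the paper, which justifies the lemma by the preceding remark: Proposition \ref{prop:obstruction_g} gives $\Omega_g(K)=\kappa-1=0$ for a connected planar uni-linkoid diagram, so every such diagram is admissible and Theorem \ref{thm:invariance} applies directly via the identification of Example \ref{ex:planar_linkoids}. Your additional check that the planar and starred-spherical equivalence relations coincide is a reasonable point of care but is already built into Example \ref{ex:planar_linkoids}, so no new argument is needed.
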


The importance of this is the following: While spherical uni-linkoids and particularly knotoids are relatively well-understood, planar knotoids have proven to be more elusive. In \cite{goundaroulis2019systematic} a tabulation of spherical and planar knotoids is given, and while spherical knotoids could be classified up to crossing number six, for planar knotoids even the classification for crossing number $5$ remains challenging \cite{moltmaker2023new}. The reason for this is that the construction of most invariants is automatically invariant under spherical moves, meaning they factor through the surjection from planar to spherical knotoids (recall Remark \ref{rk:planar-spherical}). The construction of invariants that respect the Reidemeister moves but are not invariant under spherical moves is therefore of active research interest. Below we give an example showing that $\nabla_K(W)$ is indeed such an invariant.

\begin{example}
Let $K_1$ and $K_2$ be the knotoid diagrams depicted in the left- and right-hand sides of Figure \ref{fig:planar_example} respectively. The diagram $K_2$ is obtained from $K_1$ by a single spherical move, namely that which moves the point at infinity to the bigon region of $K_1$. We find that $K_1$ has three states while $K_2$ has five, and that
\begin{align*}
    &\nabla_{K_1}(W) = 1\cdot B + 1\cdot (-W) + (-W)^2 = W^2-W+W^{-1},\\
    &\nabla_{K_2}(W) = 1\cdot 1 + 1\cdot 1 + 1\cdot B + (-W)\cdot B + (-W)\cdot 1 = 1-W+W^{-1}.
\end{align*}
Hence $K_1$ and $K_2$ are inequivalent planar knotoids and this is detected by $\nabla$, even though they are equivalent as spherical knotoids.
\end{example}

\begin{figure}[ht]
    \centering
    \includegraphics[width=.6\linewidth]{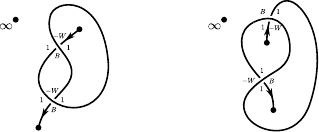}
    \caption{Two inequivalent planar knotoids. Crossings are decorated with local weights.}
    \label{fig:planar_example}
\end{figure}

\subsection{Canonical starrings on the sphere}

While planar uni-linkoids are admissible so that the mock Alexander polynomial constitutes a canonical invariant, the same is not true for spherical uni-linkoids which have $\Omega_g(L)=-1$ by Proposition \ref{prop:obstruction_g}. As discussed in Section \ref{sec:MAP}, spherical uni-linkoids can be made admissible by adding a single degree zero node to them. However the resulting mock Alexander polynomial may depend on the choice of region in which this degree zero node is placed. For a uni-linkoid there are two natural choices of where to place this node, namely the regions containing the tail and head of the knotoidal component. As we shall prove in this section, these two choices yield equal mock Alexander polynomials up to a change of variables $W\leftrightarrow -W^{-1}$. Before proving this in Theorem \ref{thm:knotoids} we first treat a few more preliminaries.

\begin{lemma}\cite{MAP}\label{lm:rev}
Let $K$ be a starred uni-linkoid diagram, and $-K$ be its reverse obtained by reversing the orientation on each component of $K$. Then
\[
    \nabla_{K}(W) = \nabla_{-K} (-W^{-1}).
\]
\end{lemma}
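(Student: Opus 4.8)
The plan is to work directly from the state-sum definition of the potential (Definition \ref{def:potential}) and to exploit the fact that $K$ and $-K$ have literally the same universe. A state is a bijection between regions and crossings that assigns to each crossing an adjacent quadrant, so it depends only on the underlying universe $U_K$; and reversing all orientations changes neither $U_K$, nor the over/under data at the crossings, nor the placement of stars. Hence $U_{-K}=U_K$, and there is a canonical identity bijection between the state sets, $\mathcal{S}(K)=\mathcal{S}(-K)$. It therefore suffices to compare, for each fixed state $s$, the weight $\langle K \mid s\rangle$ with $\langle -K \mid s\rangle$, and since both are products of local contributions over the same crossings, this reduces to a purely local comparison of quadrant labels at a single crossing.

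The key local observation is that reversing the orientations of both strands through a crossing is the same as rotating the local picture by $180^\circ$. This rotation preserves the sign of the crossing and the over/under information, and it carries each quadrant $q$ to the opposite quadrant $\bar q$. Because the weight assignment of Figure \ref{fig:weights} is defined intrinsically from the oriented crossing, it is equivariant under this rotation, so the label that $-K$ places in a quadrant $q$ equals the label that $K$ places in $\bar q$. I would then read off from Figure \ref{fig:weights} the crucial symmetry of the labelling: at both positive and negative crossings the labels of opposite quadrants are interchanged by the substitution $(W,B)\mapsto(-B,-W)$ (so that, for instance, a quadrant labelled $B$ is opposite one labelled $-W$, while a quadrant labelled $1$ is opposite another labelled $1$). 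Combining these two facts, the label $-K$ assigns to a quadrant $q$ is exactly the image under $(W,B)\mapsto(-B,-W)$ of the label $K$ assigns to $q$.

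Granting this, for a fixed state $s$ each selected local weight transforms under $(W,B)\mapsto(-B,-W)$, hence so does the product $\langle K\mid s\rangle$; summing over the common state set yields the two-variable identity $\nabla_{-K}(W,B)=\nabla_K(-B,-W)$. Specializing to $B=W^{-1}$ (Definition \ref{def:MAP}) turns $(W,B)\mapsto(-B,-W)$ into the single-variable substitution $W\mapsto -W^{-1}$, which is an involution. A short substitution chase, using $(-W^{-1})^{-1}=-W$, then gives $\nabla_{-K}(-W^{-1})=\nabla_K(W)$, which is the asserted identity.

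The main obstacle is the local verification itself: one must check carefully from Figure \ref{fig:weights} that the opposite-quadrant labels really are interchanged by $(W,B)\mapsto(-B,-W)$ at \emph{both} crossing types, and confirm that orientation reversal preserves the crossing sign, so that the positive- and negative-crossing labelling schemes are not inadvertently swapped. Once this finite check is in hand, the remainder is bookkeeping over a fixed set of states, and the reduction to uni-linkoids needs nothing extra since stars and admissibility are unaffected by reversal.
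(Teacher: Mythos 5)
Your proposal is correct and follows essentially the same route as the paper, whose proof of Lemma \ref{lm:rev} is just a one-line appeal to the local weight assignment of Figure \ref{fig:weights}; you have simply spelled out that argument in full (identical universes and state sets, $180^\circ$ rotation of the local picture under reversal of both strands, opposite-quadrant labels exchanged by $(W,B)\mapsto(-B,-W)$, then specialization at $B=W^{-1}$ where this becomes the involution $W\mapsto -W^{-1}$). The finite check you flag against Figure \ref{fig:weights} does go through, so nothing is missing.
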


\begin{proof}
This follows from the assignment of local weights in the definition of $\nabla_K$, see Figure \ref{fig:weights}.
\end{proof}

\begin{definition}
A crossing in a linkoid diagram $L$ is said to be \textit{removable} if it is of the form shown in Figure \ref{fig:removable_crossing}, i.e. connects on one side to a $(1,1)$-tangle $T$. (Here by an $(n,m)$-tangle we mean a tangle with $n$ ingoing ends and $m$ outgoing ends.)
\end{definition}

\begin{figure}[ht]
    \centering
    \includegraphics[width=0.2\linewidth]{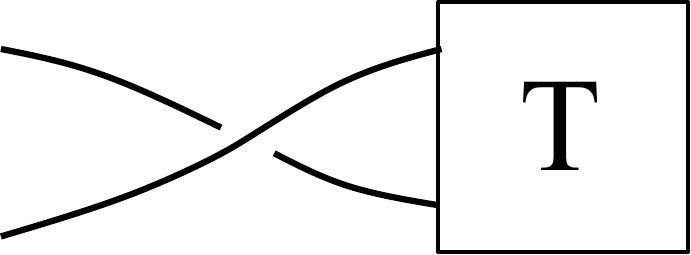}
    \caption{A removable crossing. Here $T$ represents a tangle with one in-going and one out-going end.}
    \label{fig:removable_crossing}
\end{figure}

Clearly removable crossing can be removed using Reidemeister moves by `flipping' $T$ over. For linkoids with several knotoidal components there are examples of separating crossings that are not removable: consider for example the unique $(2,0)$-linkoid diagram with exactly one crossing. We show this is not the case for uni-linkoids:

\begin{lemma}\label{lm:separating}
Let $K$ be a uni-linkoid, and let $c$ be a separating crossing in $L$. Then $c$ is removable.
\end{lemma}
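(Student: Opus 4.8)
The plan is to read off the local combinatorial structure of the universe $U_K$ at the separating crossing $c$, and to exploit the defining feature of a uni-linkoid: it has a single knotoidal component, hence exactly two endpoints (one tail and one head). Label the four half-edges meeting $c$ by $1,2,3,4$ in cyclic planar order. Since $c$ is separating, one of its two smoothings yields a split diagram; this smoothing necessarily joins two adjacent pairs of half-edges, so after relabelling we may assume it joins $\{1,2\}$ and $\{3,4\}$. First I would observe that this smoothing disconnects $U_K$ into two pieces $A$ and $B$, where $A$ is everything reachable from the half-edges $1,2$ and $B$ everything reachable from $3,4$. Thus $A$ is attached to $c$ only through the adjacent half-edges $1,2$, and $B$ only through $3,4$.

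The key step is a parity count. Each of $A$ and $B$ meets the remainder of the diagram in exactly two strand-ends, namely the two half-edges by which it is glued to $c$. Every arc of $K$ lying inside $A$ has two endpoints, each of which is either one of these two boundary strand-ends or a genuine endpoint (tail/head) of the knotoidal component. Counting arc-endpoints in $A$ in two ways gives $2 + (\#\text{ knotoid endpoints in }A) = 2\cdot(\#\text{ arcs in }A)$, so the number of knotoid endpoints in $A$ is even, and likewise for $B$. Because $K$ has exactly one knotoidal component, there are exactly two endpoints in total, and neither lies at $c$; combined with the evenness just established, this forces both endpoints into the same piece, say $A$, leaving $B$ free of endpoints. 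This is precisely where the single-knotoidal-component hypothesis is essential: with two or more knotoidal components the endpoints can distribute evenly between $A$ and $B$, which is exactly what happens in the non-removable $(2,0)$-example noted before the lemma.

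It then remains to identify the endpoint-free piece $B$ with a $(1,1)$-tangle attached to $c$ along two adjacent half-edges, which is exactly the removable configuration of Figure \ref{fig:removable_crossing}. Since $B$ contains no endpoints and meets the rest of the diagram only along half-edges $3$ and $4$, the component entering $B$ along $3$ cannot terminate inside $B$ and must leave along $4$. This simultaneously shows that $B$ is connected, so it is a genuine tangle rather than a disjoint union, and that, once oriented, it has exactly one in-going and one out-going end. Hence $B$ is a $(1,1)$-tangle $T$ meeting $c$ along the adjacent half-edges $3,4$, so $c$ is removable.

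I expect the main obstacle to be setting up the reachability/splitting description and the parity bookkeeping precisely: one must be careful to count the endpoints of arcs lying in a piece correctly, distinguishing the two boundary strand-ends from genuine tail/head endpoints, so that the evenness conclusion is valid and really pins down one endpoint-free side. Once this is done the remaining identification with a $(1,1)$-tangle is routine.
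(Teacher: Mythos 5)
Your proof is correct and follows essentially the same route as the paper's: decompose the diagram at the separating crossing into two $(1,1)$-tangles and show by a parity argument that both endpoints of the knotoidal component must lie in the same tangle, so that the other side is an endpoint-free $(1,1)$-tangle and the crossing is removable. The only difference is cosmetic --- you count boundary points of the arcs in each piece directly, while the paper argues by contradiction, following the knotoidal component and observing that a closed component would then have to pass between the two tangles an odd number of times.
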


\begin{proof}
Since $c$ is separating it must be of the form depicted in Figure \ref{fig:separating}, with two of its adjacent ends connecting to a $(1,1)$-tangle $T_1$ and with the other ends connecting to another $(1,1)$-tangle $T_2$. If both endpoint of $K$ lie in either $T_1$ or $T_2$ then $c$ is removable. Suppose for a contradiction that both $T_1$ and $T_2$ contain exactly one endpoint of $K$, as in Figure \ref{fig:separating}. Then following the knotoidal component of $K$, we begin in $T_1$, move through one of the arcs constituting $c$, and end in $T_2$. After moving through one of the arcs of $c$, the component cannot return to $T_1$ as it must end in $T_2$ and there is only one arc of $c$ that the component has not moved through yet. Therefore this other arc of $c$ must lie on a closed component of $K$. But now this closed component moves between $T_1$ and $T_2$ exactly once, i.e.~an odd number of times, contradicting that the component is closed. Therefore we arrive at a contradiction.
\end{proof}

\begin{figure}[ht]
    \centering
    \includegraphics[width=0.35\linewidth]{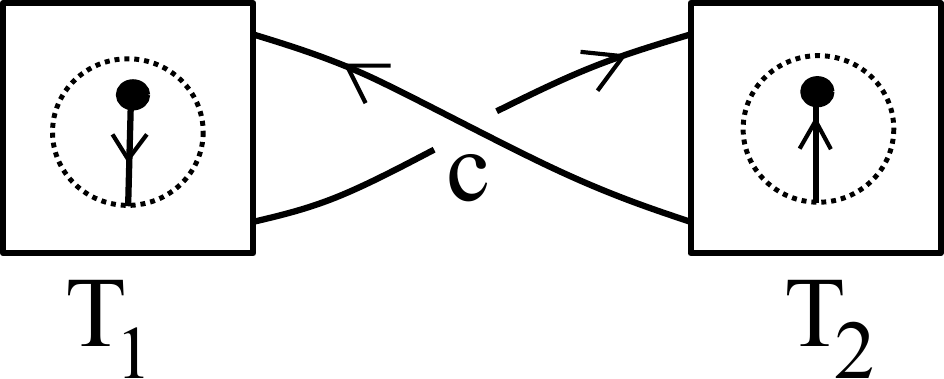}
    \caption{Proof of Lemma \ref{lm:separating}: a separating crossing that is not removable.}
    \label{fig:separating}
\end{figure}

\begin{theorem}\label{thm:knotoids}
Let $K$ be an oriented spherical uni-linkoid diagram. Let $D^t$ and $D^h$ be the starred knotoid diagrams given by endowing the tail and head regions of $K$ with stars, respectively. Then
\[
    \nabla_{D^t} (W) = \nabla_{D^h}(-W^{-1}).
\]
\end{theorem}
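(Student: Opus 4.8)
The plan is to prove the statement by induction on the number of crossings of $K$, using the skein relation (Theorem \ref{thm:skein}) to reduce the crossing count, with the skein module structure (Theorem \ref{thm:skein_module}) providing the base cases. The key observation connecting the two starrings is Lemma \ref{lm:rev}: reversing all orientations sends $\nabla_K(W)$ to $\nabla_{-K}(-W^{-1})$. So the target identity $\nabla_{D^t}(W)=\nabla_{D^h}(-W^{-1})$ would follow if I can show that reversing the orientation of $K$ interchanges the roles of the tail and head regions in a way compatible with the starring—that is, that $-K$ with its tail starred is (in a suitable sense, up to the relevant invariance) the same data as $K$ with its head starred. Under orientation reversal the tail and head of the knotoidal component swap, so the tail region of $-K$ is precisely the head region of $K$; this is exactly what makes $\nabla_{-(D^h)}$ and $D^t$ comparable.

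First I would set up the base cases. By Theorem \ref{thm:skein_module}, the uni-linkoid skein module $\mathcal{S}_U$ is free with generators $G_n$, so it suffices to verify the identity on each generator $G_n$ and then show the identity is preserved under the skein relation. For the generators I would compute $\nabla_{G_n}$ directly for both starrings—these are explicit diagrams (a trivial knotoidal strand with a closed component winding $n$ times), so the potential matrices are tractable and the symmetry $W\leftrightarrow -W^{-1}$ can be checked by hand or by exhibiting the orientation-reversal bijection on states.

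Next I would run the inductive step via the skein relation. Given a crossing $c$ of $K$, if $c$ is non-separating, Theorem \ref{thm:skein} gives $\nabla_{L_+}(W)-\nabla_{L_-}(W)=(W-W^{-1})\nabla_{L_0}(W)$ for both the tail- and head-starred versions simultaneously (the starring is in a fixed region away from the skein disk). Applying the change of variables $W\mapsto -W^{-1}$ to the head-starred relation, I note that $(W-W^{-1})$ is \emph{anti}-invariant under $W\mapsto -W^{-1}$, i.e.\ it maps to $-(W-W^{-1})$, and I must track the corresponding sign/crossing-switch on $L_+,L_-$. Here the orientation-reversal viewpoint is essential: reversing orientation swaps $L_+\leftrightarrow L_-$ at an oriented crossing, which precisely accounts for the sign so that the skein relation is consistent with the claimed symmetry, letting the inductive hypothesis on the simpler diagrams $L_0,L_-$ (or $L_+$) close the argument.

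The main obstacle is the separating-crossing case, where Theorem \ref{thm:skein} does not apply and the skein reduction stalls. This is exactly why Lemma \ref{lm:separating} is proved just before: for uni-linkoids every separating crossing is \emph{removable}, so it can be eliminated by Reidemeister moves (flipping the attached $(1,1)$-tangle) without changing either mock Alexander polynomial, by Theorem \ref{thm:invariance}. Thus whenever the skein relation is blocked by a separating crossing I would instead simplify $K$ by removing that crossing, strictly decreasing the crossing count while preserving both $\nabla_{D^t}$ and $\nabla_{D^h}$, and then apply the inductive hypothesis. The delicate point to verify carefully is that removing a separating crossing does not move the tail or head out of their designated regions (or, if it does, that it does so symmetrically under orientation reversal); confirming this compatibility between crossing removal and the placement of the stars is where I expect the argument to require the most care.
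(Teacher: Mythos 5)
Your overall strategy is the same as the paper's: reduce $K$ to the skein-module generators $G_n$ using the skein relation at non-separating crossings, dispose of separating crossings via Lemma \ref{lm:separating}, and verify the identity on each $G_n$ by combining the tail--head symmetry of the generators with Lemma \ref{lm:rev}. However, two points in your skein step are wrong as stated. First, $W-W^{-1}$ is \emph{invariant}, not anti-invariant, under $W\mapsto -W^{-1}$: substituting gives $-W^{-1}-(-W^{-1})^{-1}=-W^{-1}+W=W-W^{-1}$. Second, reversing the orientation of \emph{every} component preserves the sign of every crossing (only reversing one of the two strands through a crossing flips its sign), so orientation reversal does not exchange $L_+$ and $L_-$. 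These two errors happen to cancel: the correct statement is simply that $L\mapsto\nabla_{L^h}(-W^{-1})$ satisfies the very same skein relation as $L\mapsto\nabla_{L^t}(W)$, with no sign and no crossing switch, which is exactly what makes the paper's difference $\delta_K(W)=\nabla_{D^t}(W)-\nabla_{D^h}(-W^{-1})$ a quantity that vanishes on skein relators. You reach the right conclusion, but via two compensating mistakes.

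The second issue is the induction scheme. An induction on crossing number alone does not close: the skein relation relates $L_+$ and $L_-$, which have the \emph{same} number of crossings, and the generators $G_n$ have $2n-1$ crossings, so they cannot serve as base cases of such an induction. The paper instead argues at the level of the skein module: since $\mathcal{S}_U$ is free on the $G_n$, it suffices to show (i) that some reduction of $K$ to a $\mathbb{Z}[W^{\pm1}]$-linear combination of generators can be carried out using the skein relation only at non-separating crossings --- this is the Claim in the paper's proof, established by replacing any skein application at a removable crossing with Reidemeister moves and running a secondary descending-diagram induction on the number of crossings that must be switched --- and (ii) that $\delta_{G_n}=0$, which follows because pushing the encircling arcs of $G_n$ across $\infty\in S^2$ exhibits an isotopy between $G_n^h$ and $-(G_n^t)$, whence Lemma \ref{lm:rev} applies. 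Your proposal contains all of these ingredients, including the correct concern about separating crossings, but to be complete you would need to replace the crossing-number induction with the skein-module argument (or a double induction on crossing number and number of switches to a standard form), and note that the generator identity is a special symmetry of the $G_n$, not something that holds for arbitrary uni-linkoids.
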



\begin{proof} 
Let $\delta_K(W) = \nabla_{D^t}(W)-\nabla_{D^h}(-W^{-1})$. Then we wish to show that $\delta_K(W)=0$. Since $\delta_K$ is a linear combination of mock Alexander polynomials we know that $\delta_K$ satisfies the skein relation \eqref{eq:skein} at non-separating crossings by Theorem \ref{thm:skein}. By Theorem \ref{thm:skein_module} any uni-linkoid can be reduced to a $\mathbb{Z}[W^{\pm1}]$-linear combination of generators $G_n$ using the skein relation. 

\textbf{Claim:} Any uni-linkoid can be reduced to a linear combination of the generators using only applications of the skein relation at non-separating crossings, as well as the Reidemeister moves. 

\textit{Proof of claim:} Take a sequence of applications of the skein relation that reduces $K$ to a linear combination of generators. Take the first skein relation in this sequence that involves a separating crossing $c$. Then by Lemma \ref{lm:separating} $c$ is removable, and adjacent to a $(1,1)$-tangle $T$ as depicted in Figure \ref{fig:removable_crossing}. The action of the skein relation on $c$ therefore consists of replacing it by a linear combination of a diagram in which $c$ has been flipped, and a diagram in which $c$ has been smoothed to split $T$ off as a link disjoint from the rest of the diagram. To further reduce $K$ one then applies the skein relation to reduce this link to the empty diagram. To get rid of the application of the skein relation at $c$, we instead use the Reidemeister moves to remove $c$ and reduce $T$ to the trivial tangle without splitting it off as a link. For this we use the standard argument showing that the Alexander skein module for links is trivial, i.e.~generated by the empty diagram: $T$ is said to be \textit{descending} if, when running through the components of $T$, each crossing of a component with itself is encountered first as an over-crossing, and for all pairs of distinct components $C_1,C_2$ if $C_1$ makes an over-crossing with $C_2$ then all other crossings that $C_1$ makes with $C_2$ are also over-crossings. Let $n_c$ be the crossing number of $T$, let $D$ be the set of crossings in $T$ that need to be flipped in order to make $T$ descending, and let $n_d=\lvert D\rvert$. Select a crossing $d$ in $D$. If $d$ is separating, then it is removable by Lemma \ref{lm:split} and we remove it, lowering $n_c$ by one. Otherwise apply the skein relation to $d$, replacing $T$ with the sum of a diagram in which $d$ has been flipped and a diagram from which $d$ has been removed. In the former diagram, $n_c$ has remained constant and $n_d$ has been lowered by one. In the latter, both $n_c$ and $n_d$ have been lowered by one. It is clear that if $n_c=0$ or $n_d=0$ then $T$ is trivial. Therefore by induction $T$ can be reduced to a linear combination of trivial diagrams using only the skein relation on non-separating crossings and the Reidemeister moves. This proves the claim.

By the claim, as well as Theorem \ref{thm:skein} and invariance of $\nabla_K(W)$, we conclude that $\delta_K$ is a $\mathbb{Z}[W^{\pm1}]$-linear combination of $\delta_{G_n}$'s. Therefore it suffices to prove the statement for $G_n$ for all $n\in \mathbb{Z}_{\geq 0}$.


To see that $\delta_{G_n}(W)=0$ it suffices to note that $G_n$ looks identical when viewed from the tail as when viewed from the head, but with the orientations of both of its components swapped. This can be seen by pushing all the arcs encircling the tail of $G_n$ past $\infty\in S^2$, to encircle the head instead. See Figure \ref{fig:generator_example} for an example of this procedure. 

\begin{figure}[ht]
    \centering
    \includegraphics[width=.9\linewidth]{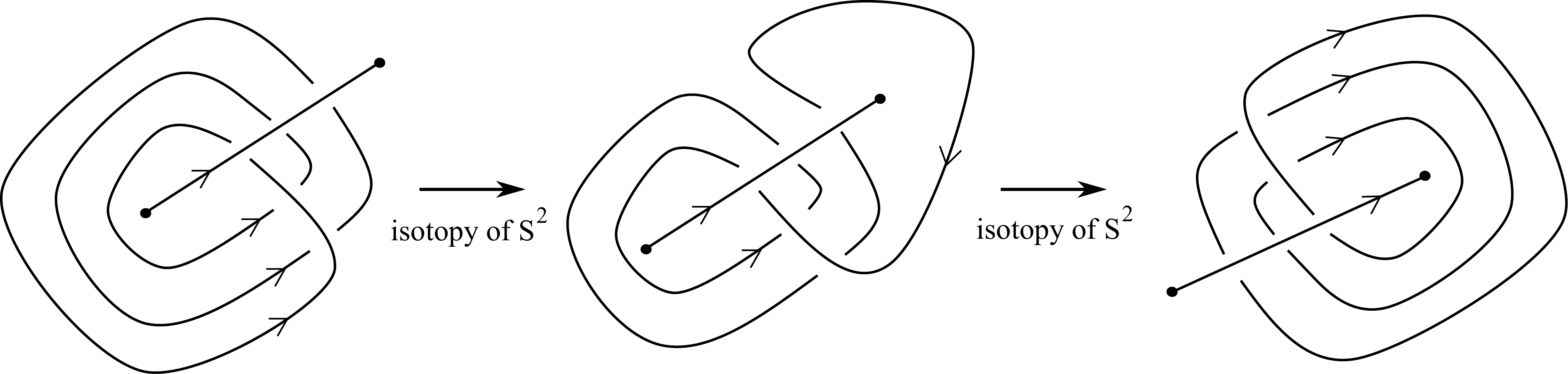}
    \caption{Relating the generator $G_3$ to its reverse on $S^2$.}
    \label{fig:generator_example}
\end{figure}

From this we infer that $\nabla_{G_n^t} (W)=\nabla_{(-G_n)^t} (W)$, where $-G_{n}$ denotes the reverse of $G_{n}$. Equivalently, $-(G_n^t)$ is isotopic to $G_n^h$. Therefore we have:
\begin{align*}
    \delta_{G_n}(W) &= \nabla_{G_n^t}(W) - \nabla_{G_n^h}(-W^{-1})\\
    &= \nabla_{G_n^t}(W) - \nabla_{-(G_n^t)}(-W^{-1})\\
    &= \nabla_{G_n^t}(W) - \nabla_{G_n^t}(W)\\
    &=0
\end{align*}
as required. Here the penultimate equality follows from Lemma \ref{lm:rev}.


\end{proof}

Theorem \ref{thm:knotoids} can be rephrased as follows.

\begin{corollary}
Let $K$, $D^t$, and $D^h$ be as before. Then
\[
    \nabla_{D^t} (W) = \nabla_{-(D^h)}(W).
\]
\end{corollary}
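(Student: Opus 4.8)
The plan is to deduce this corollary immediately from Theorem~\ref{thm:knotoids}, using only the reversal symmetry recorded in Lemma~\ref{lm:rev}; no new geometric or skein-theoretic argument is required, since all the content already lives in the theorem. Theorem~\ref{thm:knotoids} supplies $\nabla_{D^t}(W) = \nabla_{D^h}(-W^{-1})$, so the entire task reduces to identifying the right-hand side $\nabla_{D^h}(-W^{-1})$ with $\nabla_{-(D^h)}(W)$, i.e.\ to absorbing the variable change $W \leftrightarrow -W^{-1}$ into a reversal of orientations.

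The key step is to apply Lemma~\ref{lm:rev} not to $D^h$ but to its reverse $-(D^h)$. The lemma states that for any starred uni-linkoid diagram $J$ one has $\nabla_{J}(W) = \nabla_{-J}(-W^{-1})$; taking $J = -(D^h)$ and using that reversal is an involution on diagrams, $-(-(D^h)) = D^h$, gives $\nabla_{-(D^h)}(W) = \nabla_{D^h}(-W^{-1})$. Here the only thing to verify is that reversal sends a starred uni-linkoid diagram to a starred uni-linkoid diagram, so that Lemma~\ref{lm:rev} genuinely applies: reversing $D^h$ leaves the single degree-zero node (the star in the head region of $K$) in place while flipping the orientation of every component, which is exactly a starred uni-linkoid diagram, so this is immediate from the definitions.

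Chaining the two equalities then yields $\nabla_{D^t}(W) = \nabla_{D^h}(-W^{-1}) = \nabla_{-(D^h)}(W)$, which is the assertion. I expect no genuine obstacle in this argument; the one point demanding a little care is that the substitution $W \mapsto -W^{-1}$ in Lemma~\ref{lm:rev} is an involution (since $-\bigl(-W^{-1}\bigr)^{-1} = W$), and it is precisely this involutivity that lets the application to $-(D^h)$ cancel the $-W^{-1}$ appearing on the right-hand side of Theorem~\ref{thm:knotoids}, rather than doubling it.
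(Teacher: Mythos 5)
Your proof is correct and takes essentially the same route as the paper, which presents this corollary as a direct rephrasing of Theorem~\ref{thm:knotoids} obtained by absorbing the substitution $W \mapsto -W^{-1}$ into a reversal via Lemma~\ref{lm:rev}. Your care about the involutivity of the substitution and of reversal is exactly the (only) point that needs checking, and it checks out.
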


\begin{remark}
Theorem \ref{thm:knotoids} implies that, up to a change of variables, there is a single canonical mock Alexander polynomial for uni-linkoids on the sphere. We discuss the problem of associating a canonical polynomial to linkoids with several knotoidal components in Section \ref{sec:linkoids}.
\end{remark}

As another corollary to Theorem \ref{thm:knotoids} we obtain a formula for the mock Alexander polynomial of the `\textit{virtual closure}' of a knotoid. Let $K$ be a knotoid diagram in $S^2$. The virtual closure $v(K)$ of $K$ is obtained by adding an arc between the endpoint of $K$ and declaring every other intersection point of this arc with $K$ to be a virtual crossing. The result is a virtual knot in $S^2$ which, by the equivalent formulation of virtual knots as knot diagrams on surfaces, can also be seen as a knot on the torus. This is done by adding a handle to $S^2$ between the regions containing the endpoints of $K$ and letting the added arc between the endpoints run along this handle. Figure \ref{fig:virtualclosure} depicts the virtual closure of a knotoid diagram and its torus representation.

 \begin{figure}[ht]
\centering
\includegraphics[width=.85\textwidth]{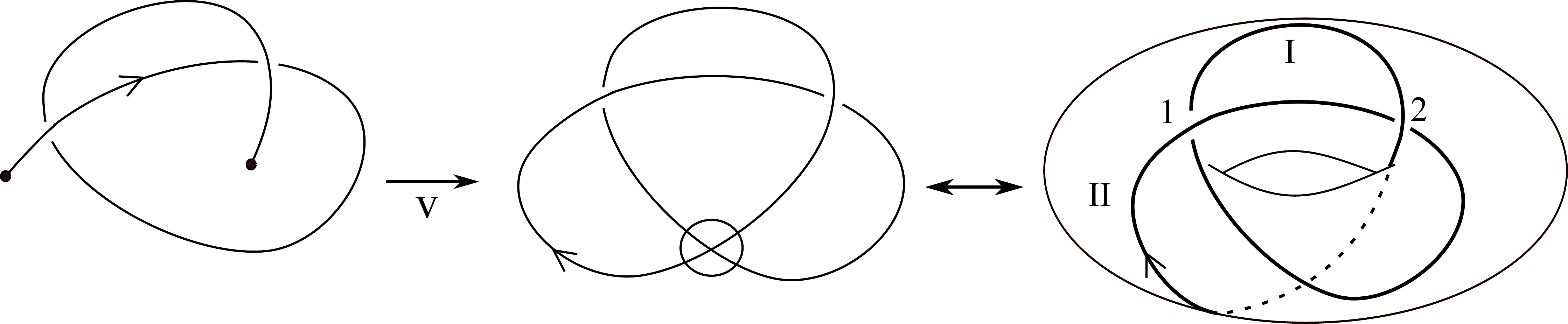}
\caption{The virtual closure of a knotoid \cite{MAP}.}
\label{fig:virtualclosure}
\end{figure}

The virtual closure of any knotoid diagram is an admissible diagram in the torus, as long as its endpoints lie in distinct regions. An example from \cite{MAP} gives the potential matrix of the virtual knot $v(K)$ in Figure \ref{fig:virtualclosure} as 

\[
    M_{v(K)} = 
    \begin{bmatrix}
    -W^{-1} & W+2\\
    W& 2-W^{-1} \\
    \end{bmatrix}.
\]

By calculating the permanent of this matrix they find that

\[
    \nabla_{v(K)} (W) = W^2 + W^{-2}+ 2( W-W^{-1}).
\]

It can be checked that in this case, the mock Alexander polynomial of $v(K)$ is equal to $\nabla_{K^{t}}(W) + \nabla _{K^{h}} (W)$. Indeed, this was proven to hold generally in \cite{MAP}.

\begin{lemma}\label{lm:virtual_closure}
\cite{MAP}
Let $K$ be a knotoid diagram in $S^2$, and $v(K)$ denote the torus representation of its virtual closure. Then $$\nabla_{v(K)} (W)= \nabla _{K^{t}} (W) + \nabla_{K^{h}}(W).$$
\end{lemma}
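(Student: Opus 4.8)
The plan is to prove the identity at the level of potential matrices and then invoke the multilinearity of the permanent, rather than matching states by hand. By Proposition \ref{prop:perm} it suffices to show that $\mathrm{Perm}(M_{v(K)}) = \mathrm{Perm}(M_{K^t}) + \mathrm{Perm}(M_{K^h})$, so first I would pin down the combinatorial data of $v(K)$ on the torus and relate its potential matrix to that of $K$. Writing $n$ for the number of crossings of $K$, Proposition \ref{prop:obstruction} gives $\Omega(K)=-1$, so $K$ has $n+1$ regions on $S^2$; I would name the region containing the tail $R_t$, the one containing the head $R_h$, and the remaining $n-1$ regions \emph{generic}. Throughout I assume $R_t\neq R_h$, which is precisely the condition under which $v(K)$ is admissible. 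Let $M_K$ be the $n\times(n+1)$ weight matrix of $K$ whose columns are indexed by all regions, so that $M_{K^t}$ and $M_{K^h}$ are obtained from $M_K$ by deleting the $R_t$- and $R_h$-columns respectively.

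The topological heart of the argument is to describe the regions of $v(K)$. Adding a handle between $R_t$ and $R_h$ fuses these two disks into a single annular region whose two boundary circles are $\partial R_t$ and $\partial R_h$; the closing arc $\alpha$ then runs from the tail (on $\partial R_t$) to the head (on $\partial R_h$), i.e.\ it joins the two boundary circles of this annulus and thereby cuts it into a single disk $A'$. Since $\alpha$ meets no crossing (its would-be intersections are virtual and are absorbed by the handle), the $n$ crossings of $v(K)$ and all of their quadrants coincide with those of $K$; the only change is that every quadrant formerly lying in $R_t$ or in $R_h$ now lies in $A'$, while the $n-1$ generic regions are untouched. Consequently $v(K)$ has exactly $n$ disk regions — namely $A'$ together with the generic regions — so it is admissible, and its potential matrix $M_{v(K)}$ has the same rows (crossings) and identical generic columns as $M_K$, while its single $A'$-column equals the column-sum $\mathrm{col}_{R_t}(M_K)+\mathrm{col}_{R_h}(M_K)$ (recall that, by definition of the potential matrix, several quadrants of one crossing in the same region contribute additively). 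Verifying this region surgery carefully — in particular that the annulus is cut into one disk and not two, and that no quadrant is lost or duplicated — is the main obstacle; everything after it is formal.

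Finally I would expand the permanent using its linearity in each column. Since $M_{v(K)}$ differs from $M_K$ only in carrying the single column $\mathrm{col}_{R_t}+\mathrm{col}_{R_h}$ in place of the two columns $\mathrm{col}_{R_t},\mathrm{col}_{R_h}$, column-linearity of the permanent gives
\[
  \mathrm{Perm}(M_{v(K)}) = \mathrm{Perm}\big([\,\mathrm{col}_{R_t}\,;\,\text{generic}\,]\big) + \mathrm{Perm}\big([\,\mathrm{col}_{R_h}\,;\,\text{generic}\,]\big).
\]
The first permanent is exactly $\mathrm{Perm}(M_{K^h})$ and the second is $\mathrm{Perm}(M_{K^t})$, since deleting the $R_h$-column from $M_K$ leaves the $R_t$-column together with the generic columns, and vice versa. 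Applying Proposition \ref{prop:perm} three times then yields $\nabla_{v(K)}(W) = \nabla_{K^t}(W) + \nabla_{K^h}(W)$, as claimed. Equivalently, one can phrase this as a weight-preserving bijection between the states of $v(K)$ whose crossing assigned to $A'$ selects a former $R_t$-quadrant and the states of $K^h$, and likewise between the former-$R_h$ states and those of $K^t$; the permanent version simply packages this state bijection algebraically.
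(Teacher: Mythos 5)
Your argument is correct, and it is worth noting that the paper itself offers no proof of this lemma --- it is quoted from \cite{MAP} and merely illustrated by the $2\times 2$ example --- so your permanent-based derivation supplies a self-contained proof where the text gives none. The key steps all check out: with $R_t\neq R_h$ (the standing assumption under which the paper asserts admissibility of $v(K)$, so you are right to impose it), attaching the handle turns $(R_t\setminus D_1)\cup(\text{cylinder})\cup(R_h\setminus D_2)$ into an annulus with boundary circles $\partial R_t$ and $\partial R_h$, and the closing arc is a properly embedded arc joining the two boundary circles, so it cuts the annulus into a single disk $A'$; since the arc misses all crossings, every quadrant formerly in $R_t$ or $R_h$ lands in $A'$ and none is lost or duplicated, which is exactly the statement that the $A'$-column of $M_{v(K)}$ is $\mathrm{col}_{R_t}(M_K)+\mathrm{col}_{R_h}(M_K)$ under the paper's convention that coincident quadrants contribute additively to a matrix entry. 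Column-linearity of the permanent and Proposition \ref{prop:perm} then finish the proof, and your identification of which summand is $M_{K^t}$ and which is $M_{K^h}$ (deleting the $R_t$-column yields $M_{K^t}$, so the summand retaining $\mathrm{col}_{R_t}$ is $M_{K^h}$) is stated the right way around. The concluding remark that this packages a weight-preserving bijection of states is also accurate, and is presumably closer in spirit to how \cite{MAP} argues; your matrix formulation is cleaner. The result is consistent with the worked example in the text, whose permanent expands as $(-W^{-1})(2-W^{-1})+W(W+2)=W^2+W^{-2}+2(W-W^{-1})$.
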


 
Combining Theorem \ref{thm:knotoids} and Lemma \ref{lm:virtual_closure}, we obtain the following identity.

\begin{corollary}
    
Let $K$ be a knotoid in $S^2$ and $v(K)$ denote its virtual closure.

$$\nabla_{v(K)} (W) = \nabla_{K^{t}} (W) + \nabla_{K^{t}} (-W^{-1}).$$

\end{corollary}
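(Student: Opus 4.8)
The plan is to combine Lemma \ref{lm:virtual_closure} with Theorem \ref{thm:knotoids} directly, reducing the corollary to a single change-of-variables step. First I would invoke Lemma \ref{lm:virtual_closure}, which expresses the mock Alexander polynomial of the virtual closure as the sum of the two canonical starrings:
\[
    \nabla_{v(K)}(W) = \nabla_{K^t}(W) + \nabla_{K^h}(W).
\]
The only task then remaining is to rewrite the head-starred term $\nabla_{K^h}(W)$ in terms of the tail-starred polynomial $\nabla_{K^t}$.

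For this I would apply Theorem \ref{thm:knotoids}, which states $\nabla_{K^t}(W) = \nabla_{K^h}(-W^{-1})$. The key observation is that the substitution $W \mapsto -W^{-1}$ is an involution: applying it twice returns $-(-W^{-1})^{-1} = -(-W) = W$. Substituting $W \mapsto -W^{-1}$ throughout the identity of Theorem \ref{thm:knotoids} therefore yields $\nabla_{K^t}(-W^{-1}) = \nabla_{K^h}(W)$, which reads the theorem ``in reverse'' to solve for the head-starred polynomial.

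Inserting this into the expression from Lemma \ref{lm:virtual_closure} gives
\[
    \nabla_{v(K)}(W) = \nabla_{K^t}(W) + \nabla_{K^t}(-W^{-1}),
\]
which is the desired identity. The main obstacle here is essentially nonexistent: all the substantive content lives in the two results being combined, namely the virtual-closure formula of Lemma \ref{lm:virtual_closure} and the tail--head symmetry of Theorem \ref{thm:knotoids}. The only point warranting careful verification is that $W \mapsto -W^{-1}$ squares to the identity, since it is precisely this involutive property that licenses swapping the roles of the tail and head starrings when passing from Theorem \ref{thm:knotoids} to the substituted identity.
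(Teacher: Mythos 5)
Your proposal is correct and is precisely the argument the paper intends: it derives the corollary by combining Lemma \ref{lm:virtual_closure} with Theorem \ref{thm:knotoids}, using that $W \mapsto -W^{-1}$ is an involution to rewrite $\nabla_{K^h}(W)$ as $\nabla_{K^t}(-W^{-1})$. No gaps.
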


\section{The mock Alexander polynomial for linkoids}\label{sec:linkoids}

As was mentioned in Section \ref{sec:MAP}, while any linkoid diagram can obviously be made admissible by replacing it with an appropriate starred linkoid diagram, this replacement is not necessarily made canonically. In fact the starred linkoid diagram is usually not an invariant of the original linkoid. For example the result of starring some crossings of a diagram is clearly not invariant under $R3$ moves in general. 

In Section \ref{sec:knotoids} we considered the canonical starrings for spherical uni-linkoids and showed that as far as $\nabla_K(W)$ is concerned these starrings are essentially equivalent. In this section we consider the problem of canonically associating a mock Alexander polynomial of generalized knotoid diagrams to a linkoid with multiple knotoidal components. Seeing starred linkoids as generalized knotoids, this in some sense extends the application of Theorem \ref{thm:knotoids} to the construction of uni-linkoid invariants.

Given a spherical or planar $(\kappa,\ell)$-linkoid diagram $L$ with $\kappa>2$, we know that $\Omega_g(L)\geq 0$ by Proposition \ref{prop:obstruction_g}. As such we need to find a way of replacing $L$ by a generalized knotoid diagram $G(L)$, such that $\Omega_g(G(L)) < \Omega_g(L)$ and $G(L)$ is an invariant of $L$. To this end we turn to Lemma \ref{lm:edges}, which tell us that we can add edges between the endpoints of $L$ to reduce $\Omega_g(L)$ by one per added edge. Now the question is how to add these edges in a canonical and invariant way.

\subsection{Closures of uni-linkoids}

For uni-linkoids, the basic well-defined edge additions are its over-closure and under-closure, which are given by adding an edge between the uni-linkoid's endpoints that makes only over-crossings or only under-crossings, respectively. The resulting diagram is independent of the choice of embedding for this edge, since any two choices are related by Reidemeister moves. For $(\kappa,\ell)$-linkoids with $\kappa>2$ this is no longer the case as the endpoints of other components can obstruct the movement of these closing arcs. To remedy this, below we give a new definition of closure for a knotoidal component, which does not depend on any choices.

\begin{definition}\label{def:under-closures}
Let $L$ be a linkoid diagram, and choose a subset $S$ of the set of knotoidal components of $L$. We define the \textit{shadow under-closure} of $S$ to be the diagram given as follows: Let $C$ be a component in $S$. Then we add an edge to $L$ that runs parallel to $C$, always staying on the same side inside a small neighbourhood of $C$, and that makes only under-crossings with the original components of $L$. Doing so for all components in $S$ may create crossings between the added edges. Such crossings will have a corresponding crossing in the original diagram for $L$, and in the shadow closure we take the crossing between the added edges to be identical to its associated crossing in $L$. Finally, we give the added edges an orientation. This orientation can be chosen either parallel or anti-parallel to the original components of $L$. In the anti-parallel case the knotoidal components are closed into loop components, while in the parallel case they are generalized knotoid components with two degree two nodes where the orientation changes. We denote the parallel and anti-parallel shadow under closures of $S$ in $L$ by $u_{s,p}(L;S)$ and $u_{s,a}(L;S)$ respectively.


Another way to form a well-defined under-closure is to define the crossings between added edges to be \textit{opposite} to their associated crossings in the original linkoid. The result is another well-defined closure of $S$ in $L$ which we call the \textit{mirror under-closure}. Its parallel and anti-parallel versions are denoted $u_{m,p}(L;S)$ and $u_{m,a}(L;S)$ respectively.

\begin{figure}[ht]
    \centering
    \includegraphics[width=.7\linewidth]{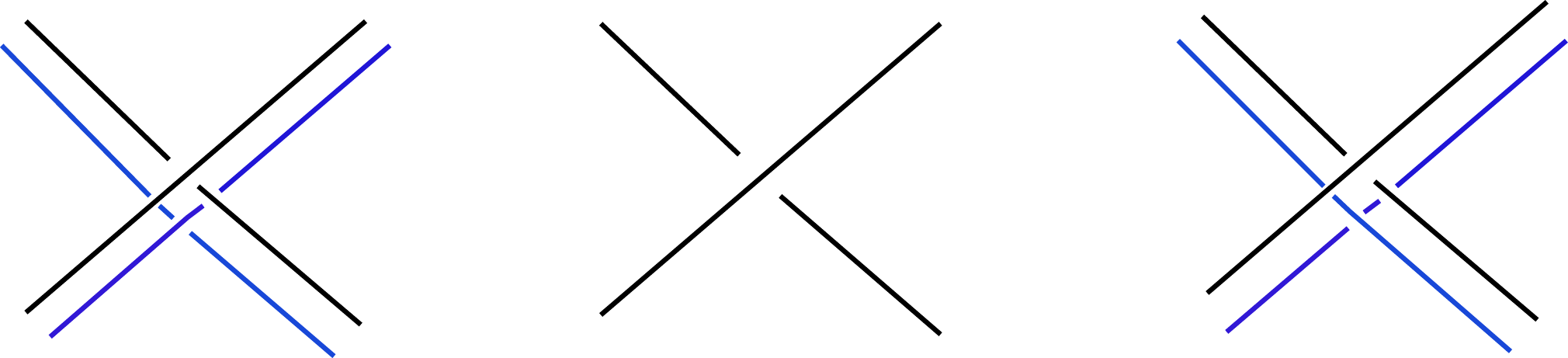}
    \caption{The shadow under-closure (left) and mirror under-closure (right) at a crossing with both components in $S$.}
    \label{fig:closures}
\end{figure}

\end{definition}

Over-closures are defined analogously to Definition \ref{def:under-closures}:

\begin{definition}\label{def:over-closures}
Let $L$ be a linkoid, and choose a subset $S$ of the set of knotoidal components of $L$. We define over-closures of $S$ by adding arcs running parallel to the components in $S$ and making only over-crossings with $L$. 
We define the \textit{(parallel and anti-parallel) shadow over-closures} $o_{s,p}(L;S)$ and $o_{s,a}(L;S)$ by choosing crossings between the over-going arcs to mimic their associated crossing in $L$, and define the \textit{(parallel and anti-parallel) mirror over-closures} $o_{m,p}(L;S)$ and $o_{m,a}(L;S)$ by choosing them to be opposite. Here the distinction between parallel and anti-parallel closures lies in the orientation of the added edges, analogously to Definition \ref{def:under-closures}.
\end{definition}

So we have a total of eight different canonical closures, with a choice between over/under, shadow/mirror, and parallel/anti-parallel. 

\begin{lemma}
For $L$ a linkoid and $S$ a subset of its knotoidal components, its shadow- and mirror-closures are well-defined.
\end{lemma}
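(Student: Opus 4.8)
The plan is to show that the equivalence class of generalized knotoid produced by any of the eight closures is independent of the choices made in its construction, the only genuine freedom being the embedded path of the added arcs. I would treat the shadow/mirror, over/under, and parallel/anti-parallel variants uniformly, since the conventions distinguishing them fix all crossing and orientation data once the arcs have been drawn; the substantive content is the same in every case.

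First I would itemize precisely what is, and what is not, a choice in Definition \ref{def:under-closures} and its over-closure analogue. For each component $C\in S$ we add a single arc $\gamma_C$ running in a thin band neighbourhood of $C$, staying on the prescribed side. Every crossing of $\gamma_C$ with an original strand of $L$ is an under-crossing (for under-closures), hence is determined by the crossing $C$ itself makes there; every crossing of $\gamma_C$ with another added arc $\gamma_{C'}$ is fixed by the shadow convention (equal to the associated crossing of $L$) or the mirror convention (opposite to it); and the orientation of $\gamma_C$, together with the way it closes up at the endpoints of $C$—into a loop, or into a degree-two node—is fixed by the parallel/anti-parallel choice. Thus the only remaining freedom is the precise embedded path of each $\gamma_C$ within its band.

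The core step is to show that two admissible choices of these arcs yield Reidemeister-equivalent diagrams. I would reduce to comparing two parallel arcs $\gamma_C,\gamma_C'$ for a single component at a time, holding the others fixed. Both lie on the same side of $C$ inside a thin band, which is a disk, so they are isotopic rel endpoints through arcs in that band, the region swept out being a union of bigons. I would then promote this planar isotopy to a Reidemeister equivalence of the full diagram: as $\gamma_C$ sweeps toward $\gamma_C'$, each time it passes a strand of $L$ or another added arc it passes beneath it (under-closure), which is an $R2$ move creating or deleting a cancelling pair of under-crossings, and each time it passes an existing crossing it is an $R3$ move. The decisive point—and the reason these closures are well-defined whereas arbitrary closing arcs for multi-component linkoids are not—is that $\gamma_C$ hugs $C$ all the way to the endpoints of $C$ and connects up there canonically, so it never has to be routed around, and in particular never across, any node, and no forbidden move is ever required.

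Finally I would assemble these single-arc equivalences into a full equivalence between any two closures built from the same conventions, noting that the shadow/mirror prescription keeps the inter-arc crossings consistent throughout the isotopies, so the moves above remain legal in the presence of the other added arcs; the over-closure case is identical with the roles of over- and under-crossings exchanged. The main obstacle I anticipate is the bookkeeping in this last step: verifying that sweeping one added arc across another, whose mutual crossings are constrained by the shadow or mirror rule, is realized by $R2$ and $R3$ moves alone and never forces an illegal move at a node. Once this local analysis is in place, the global well-definedness statement follows for all eight closures at once.
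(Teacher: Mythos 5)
There is a genuine gap: you have proved only half of what ``well-defined'' means here, and it is the half the paper treats as routine. In the lemma $L$ is a \emph{linkoid}, i.e.\ an equivalence class of diagrams, whereas the closures of Definitions \ref{def:under-closures} and \ref{def:over-closures} are constructed from a chosen diagram. Well-definedness therefore requires two things: (i) for a fixed diagram, the result is independent of the choices made in drawing the added arcs, and (ii) the construction descends to equivalence classes, i.e.\ if two diagrams of $L$ differ by a Reidemeister move then their closures are related by Reidemeister moves of generalized knotoid diagrams. Your proposal addresses only (i) --- isotoping $\gamma_C$ to $\gamma_C'$ inside the band via $R2$ and $R3$ detour moves --- and that part of your argument is fine. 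But you never vary the underlying diagram of $L$, so (ii) is untouched, and (ii) is precisely the content of the paper's proof.

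The paper's argument for (ii) is that the closure arcs are parallel copies of the components of $L$ lying at a separate level (``on a plane separate from that of the original linkoid'' except near the endpoints), so a Reidemeister move performed on $L$ is carried along to a doubled copy of the same move on the closure --- an identical copy for the shadow closures and a mirrored copy for the mirror closures --- and because the added arcs do not interfere with the original strands these doubled moves decompose into legal $R1$, $R2$, $R3$ moves of the closed diagram (Figure \ref{fig:R3_closure} illustrates this for $R3$). To repair your proof you would need to add exactly this step: check for each of $R1$, $R2$, $R3$ that the move together with its parallel shadow or mirror copy on the closure arcs is realizable by Reidemeister moves that never cross a node. Without it, you have shown only that the closure is a well-defined function of a \emph{diagram} of $L$, not of $L$ itself.
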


\begin{proof}
The arcs added by each of these closures can always be thought of as lying on a plane separate from that of the original linkoids, except where the arcs connect to the linkoid's endpoints. Since the closure arcs constitute parallel copies of the linkoid's components, a Reidemeister move in the linkoid is translated to two copies of that same move in the closure (identical copies for the shadow closures, and mirror copies for the mirror closures). Since these copies lie on separate planes they don't interfere with each other, and invariance under all three Reidemeister moves follows. See Figure \ref{fig:R3_closure} for an example of this for an $R3$ move.
\begin{figure}[ht]
    \centering
    \includegraphics[width=0.6\linewidth]{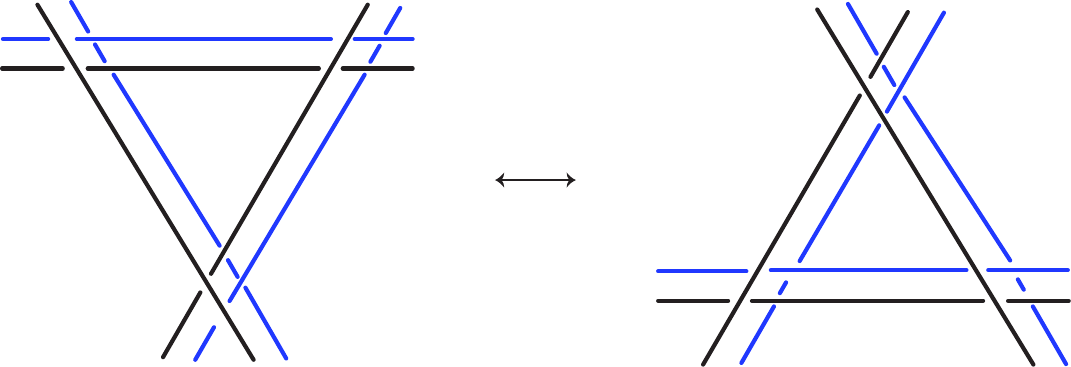}
    \caption{Invariance of $u_{m,p}(L;S)$ and $u_{m,a}(L;S)$ under $R3$.}
    \label{fig:R3_closure}
\end{figure}
\end{proof}

\begin{example}
Figure \ref{fig:ex_closures} shows a linkoid $L$ with knotoidal components $C_1,C_2$, and depicts the parallel shadow under-closure $u_{s,p}(L;\{C_1\})$ of $C_1$, as well as its anti-parallel mirror over-closure $o_{m,a}(L;\{C_1\})$.
\begin{figure}[ht]
    \centering
    \includegraphics[width=\linewidth]{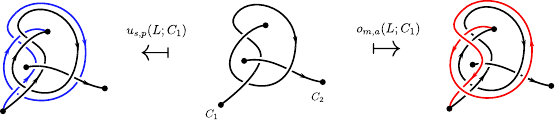}
    \caption{Parallel shadow under-closure and anti-parallel mirror over-closure of an example linkoid.}
    \label{fig:ex_closures}
\end{figure}
\end{example}

The shadow and mirror closures are in general inequivalent and have varying distinguishing power. For an example of this, see Figure \ref{fig:closed_crossing}.

\begin{figure}[ht]
    \centering
    \includegraphics[width=\linewidth]{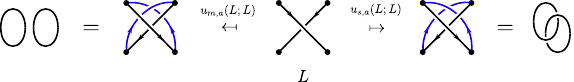}
    \caption{The anti-parallel mirror and shadow  under-closures of a single-crossing $(2,0)$-linkoid $L$. Note only the shadow closure detects non-triviality of $L$.}
    \label{fig:closed_crossing}
\end{figure}

\begin{remark}
Note that the shadow closure doubles the contribution of a crossing to the linking number, while the mirror closure cancels it. This may weaken its distinguishing power. In fact if one applies the mirror closure to every component of a linkoid the result is clearly a ribbon link, as in Figure \ref{fig:closed_crossing}, but this map from linkoids to ribbon links is easily seen not to be surjective.
\end{remark}

While it is interesting to compare the shadow and mirror closures, the shadow closures generally have more distinguishing power as a consequence to the following lemma. As such we will mainly consider the shadow closure in subsequent sections.

\begin{lemma}
The anti-parallel shadow under-closure detects under-type forbidden moves when both of the strands involved in the move are closed, while the mirror under-closure does not. Neither under-closure detects under-type forbidden moves where only the endpoint-strand has been closed. Analogous statements hold for the over-closures and over-type forbidden moves.
\end{lemma}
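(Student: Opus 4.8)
\section*{Proof proposal}

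The plan is to reframe `detection' as the failure of the composite operation `apply the closure, then read off a link invariant' to be constant under the forbidden move in question, and then to analyse the effect of a single forbidden move on the closure purely locally. Fix two diagrams $L$ and $L'$ that differ by a single under-type forbidden move, so that $L'$ is obtained from $L$ by sliding an endpoint of a knotoidal component across a transversal strand, passing \emph{under} it; this creates (or removes) exactly one under-crossing between the endpoint-strand and the transversal strand. I will say a closure \emph{detects} the move if the closures of $L$ and $L'$ can be inequivalent, and \emph{does not detect} it if they are always related by Reidemeister moves. The entire argument rests on the observation recorded in Definition \ref{def:under-closures}: the shadow and mirror under-closures are identical except at crossings between two added (closing) arcs, the `copy--copy' crossings.

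First I would carry out the local bookkeeping. An anti-parallel under-closure doubles each closed component by an anti-parallel, under-going arc, so after closing, the single crossing introduced by the forbidden move is replaced by the crossings of a small cap (the endpoint-strand together with its closing arc, forming an anti-parallel U-turn) with the transversal bundle. If \emph{only the endpoint-strand is closed}, the transversal strand is a single arc and the cap meets it in exactly two crossings, both of under-type and of opposite sign because the two sides of the cap are anti-parallel; these cancel by an $R2$ move, so the closures of $L$ and $L'$ coincide. Moreover there is no copy--copy crossing in this case, so the shadow and mirror closures agree here; hence \emph{neither} under-closure detects the move, as claimed.

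Next I treat the case where \emph{both strands are closed}. Now the transversal strand is also doubled, so the cap meets the transversal bundle in four crossings, exactly one of which---the crossing of the two closing arcs---is a copy--copy crossing and therefore distinguishes the two closures (compare Figure \ref{fig:closures}). For the shadow closure I would invoke the observation recorded above that the shadow closure doubles each crossing's contribution to the linking number: the four new crossings change the linking number $\operatorname{lk}(\tilde{C},\tilde{C}')$ of the two resulting loops by a nonzero amount, so the closures of $L$ and $L'$ have different linking numbers and are inequivalent, and the move is detected. This is exactly the behaviour of the prototype in Figure \ref{fig:closed_crossing}, whose shadow closure is a Hopf link. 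For the mirror closure the copy--copy crossing is reversed, the four contributions cancel, the linking number is unchanged, and a direct local isotopy shows the cap can be pulled back across the reversed bundle by Reidemeister moves; thus the mirror closure does not detect the move, its prototype closure in Figure \ref{fig:closed_crossing} being the unlink both before and after.

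Finally, the statements for over-closures and over-type forbidden moves follow by taking the mirror image of every diagram, which exchanges over- and under-crossings and hence exchanges over- and under-closures as well as over- and under-type forbidden moves, the detecting invariants behaving compatibly under this reflection. The main obstacle is the `does not detect' direction, which asserts a \emph{universal} statement: one must verify by a careful local diagram chase that in the two non-detecting cases the cap can always be isotoped back across the transversal bundle through Reidemeister moves, rather than merely checking that the linking number is unchanged. A secondary subtlety is that when the endpoint-strand and the transversal strand lie on the \emph{same} closed component the new crossing becomes a self-crossing and the linking number is unavailable; there one detects the change instead through a finer invariant of the closure, such as its mock Alexander polynomial (equivalently, the change in self-linking framing induced by the doubled crossings).
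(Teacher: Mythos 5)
Your argument is correct and is exactly the local analysis that the paper leaves implicit: its entire proof reads ``this follows immediately from the definitions of $u_{m,a}(L;S)$ and $u_{s,a}(L;S)$,'' relying on the preceding remark that the shadow closure doubles a crossing's contribution to the linking number while the mirror closure cancels it. Your case analysis of the four crossings in the cap--bundle interaction, the $R2$ retraction in the non-detecting cases, and the mirror-image reduction for over-closures supply precisely the details being alluded to, so this is the same approach, just written out.
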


\begin{proof}
This follows immediately from the definitions of $u_{m,a}(L;S)$ and $u_{s,a}(L;S)$.
\end{proof}

In contrast, the parallel over- and under-closures create degree two generalized knotoid nodes, and are therefore invariant under none of the forbidden moves. However the mock Alexander polynomial of these generalized knotoids will be invariant under forbidden moves in some cases: specifically, the mock Alexander polynomial of a parallel under-closure is invariant under over-type forbidden moves in which only endpoint-strand has been closed, and similarly for parallel over-closures.

In light of these remarks, in the next subsection we only make use of the parallel shadow over- and under-closures. It would be interesting to determine if the shadow closure is always stronger in distinguishing power than the mirror closure.

\subsection{Canonical polynomials}

Our approach to constructing canonical mock Alexander polynomials for linkoids is now as follows: forming shadow closures until a linkoid diagram becomes admissible relies only on a choice of finitely many knotoidal components of a linkoid $L$. Each choice has only finitely many options, with the number of choices depending on $\kappa$. So when there are several options for a choice, we are at liberty to consider the formal sum of generalized knotoids obtained from all possible options. We can then evaluate the mock Alexander polynomial state sum on this linear combination of generalized knotoid diagrams, to obtain a polynomial for $L$ that does not depend on any choices. The result is thus a canonical polynomial for the linkoid. We first illustrate this principle using the shadow under-closure below.

\begin{definition}\label{def:linkoid_MAP}
Let $L$ be a spherical $(\kappa,\ell)$-linkoid. The \textit{under-closure mock Alexander polynomial} $\nabla^u_L(W)$ of $L$ is defined as follows, depending on $\kappa$:
\begin{itemize}
    \item If $\kappa=0$, then $L$ is a link and $\nabla^u_L(W)$ is defined to be the Alexander polynomial of $L$, i.e. $\nabla_{\tilde{L}}(W)$ where $\tilde{L}$ is the starred link obtained from $L$ by starring two adjacent regions.
    \item If $\kappa=1$, $L$ is a uni-linkoid and $\Omega_g(L)=1$. In this case it suffices to star one of the endpoint regions of $L$; we choose to star the tail region without loss of generality up to a change of variables, by Theorem \ref{thm:knotoids}. Calling the resulting starred linkoid $\tilde{L}$, we define $\nabla^u_L(W)=\nabla_{\tilde{L}}(W)$.

     \item If $\kappa=2$, then $L$ is admissible, and so $\nabla^{u}_{L}$ is well-defined and can be computed directly.
    \item If $\kappa>2$, enumerate the knotoidal components of $L$ as $\mathcal{C}=\{C_1,\dots,C_\kappa\}$. Let $D_i$ be the starred diagram obtained from $u_{s,p}(L;\mathcal{C}-C_i)$ by adding a star to the region containing the tail of $C_i$. Then $\Omega_g(D_i)=0$ for all $i$, and we define
    \[
        \nabla^u_L(W) = \frac{1}{\kappa} \sum_{i=1}^\kappa \nabla_{D_i}(W).
    \]
   
\end{itemize}

The \textit{over-closure mock Alexander polynomial} $\nabla^o_L(W)$ of $L$ is defined analogously, by replacing $u_{s,p}(L;\mathcal{C}-C_i)$ with $o_{s,p}(L;\mathcal{C}-C_i)$.
\end{definition}

For planar linkoids, we can define a mock Alexander polynomial in exactly the same way, but without starring a region after forming the under-closures since the diagram already contains the point at infinity as a star. So in the case of planar linkoids, the diagram obtained from forming 
$\kappa-1$ under-closures is already admissible and its mock Alexander polynomial can be computed straight away.

\begin{example}
Let $L$ be the planar linkoid depicted in Figure \ref{fig:linkoid_polynomial}. Then $u_{s,p}(L;C_1)$ and $u_{s,p}(L;C_2)$ are depicted in Figure \ref{fig:linkoid_polynomial}, and we can compute $\nabla^u_L(W)$ from the potential matrices associated to these diagrams to find
\[
    \nabla^u_L(W) = \frac{1}{2}W^5 - \frac{1}{2}W^4 -W^3 +2W^2 + 2W -1 -W^{-1} + \frac{1}{2} W^{-2} + \frac{1}{2} W^{-3}
\]
\end{example}

\begin{figure}[ht]
    \centering
    \includegraphics[width=0.7\linewidth]{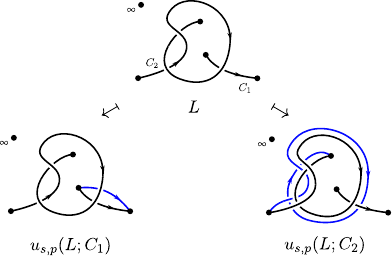}
    \caption{An example linkoid and its two admissible parallel shadow under-closures.}
    \label{fig:linkoid_polynomial}
\end{figure}

\begin{remark}
In the case of \textit{labelled} linkoid diagrams, i.e. diagrams whose components have been labelled, there is no concern of canonicity. In this case the constructions from Definition \ref{def:over-closures} can be applied straight away, since we can fix the choices made in these constructions using the component's labels.
\end{remark}

In Definition \ref{def:linkoid_MAP} we lower the obstruction by one too many, and then raise it back to zero by adding a single star. The reason we do so is that it reduces the number of choices to sum over: if all but one component is closed there are only $\kappa$ choices, where-as if alternatively all but two are closed there are $\binom{\kappa}{2}=\frac{k(k-1)}{2}$.

Although it has worse complexity in $\kappa$, namely quadratic rather than linear, this alternative approach to defining a mock Alexander polynomial for linkoids is also valid, and may result in invariants with distinguishing power different from $\nabla^o_L(W)$ and $\nabla^u_L(W)$. Similarly, in principle many constructions for replacing a linkoid diagram with an admissible generalized knotoid are possible. Each such construction gives rise to a polynomial invariant for linkoids via the state sum, and the distinguishing power of this invariant may of course be very different from that of $\nabla^o_L(W)$ and $\nabla^u_L(W)$. As an example we introduce the \textit{theta-closure} of a knotoidal component in a linkoid diagram:

\begin{definition}
Let $L$ be a linkoid, and let $C$ be a knotoidal component of $L$. Then we define the \textit{theta-closure} $\theta_C(L)$ by forming the arcs used to form both $u_{s,p}(L;C)$ and $o_{s,p}(L;C)$ and connecting these to the endpoints of $C$ to form two trivalent vertices. We assemble these vertices according to Figure \ref{fig:halfedges}, which specifies the cyclic order of half-edges chosen at each vertex. The result is a generalized knotoid diagram. Similarly if $S$ is a set of knotoidal components of $L$ then we define $\theta_S(L)$ as the generalized knotoid diagram obtained from forming the theta-closure of each component in $S$.
\end{definition}

\begin{figure}[ht]
    \centering
    \includegraphics[width=0.4\linewidth]{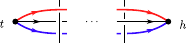}
    \caption{Cyclic order of half-edges at the trivalent points of $\theta_C(L)$.}
    \label{fig:halfedges}
\end{figure}

Since forming the theta-closure of a knotoidal component lowers $\Omega$ by two due to Lemma \ref{lm:edges}, the procedure of Definition \ref{def:linkoid_MAP} can only be applied to yield a polynomial invariant for linkoids with even obstruction, i.e. spherical linkoids with an even number of knotoidal components and planar knotoids with an odd number. In the spherical case, letting $\mathcal{K}$ denote the set of knotoidal components of $L$ we define the \textit{theta-closure mock Alexander polynomial} $\nabla^\theta_L(W)$ by
\[
    \nabla^\theta_L(W) = \frac{\kappa! \cdot \kappa}{(\frac{1}{2}\kappa!)^2(\kappa+2)} \sum_{\substack{S\subseteq \mathcal{K} \\ \lvert S\rvert = \frac{1}{2}(\kappa-2)}} \nabla_{\theta_S(L)} (W).
\]
Here the sum defining $\nabla^\theta_L(W)$ is divided by $\binom{\kappa}{\frac{1}{2}\kappa-1} = \frac{(\frac{1}{2}\kappa!)^2(\kappa+2)}{\kappa! \cdot \kappa}$, as this is the number of choices for $S$. Similarly if $L$ is a planar $(\kappa,\ell)$-linkoid with $\kappa$ odd we can define $\nabla^\theta_L(W)$ by the same formula, but with $S$ ranging over all subsets of $\mathcal{K}$ of cardinality $\frac{1}{2}(\kappa-1)$ and with the sum correspondingly normalized by $\binom{\kappa}{\frac{1}{2}(\kappa-1)}$.

\begin{remark}
Recalling Theorem \ref{thm:linkoidsandtheta} there is a clear motivation for forming theta-closures, since at the endpoints the theta-closure of a component resembles exactly its corresponding theta-curve. As such the theta-closure of a linkoid is not invariant under forbidden moves in all cases, unlike various closure operations.
\end{remark}

To extend $\nabla^\theta_L(W)$ to the case of odd obstruction, given linkoid diagram with odd $\Omega$ we can first form the shadow over- or under-closure of a single component to obtain an even obstruction, and then form theta-closures. Alternatively we can add an arc not forming a closure, but connecting two endpoints of different knotoidal components. To do so canonically, without the rest of the endpoints interfering, we embed this arc not in the sphere but in a handle attached to the sphere. 

More precisely, let $e_1,e_2$ be any two endpoints in a spherical linkoid $L$. Then we define the \textit{handle connection} $H_{e_1,e_2}(L)$ to be the linkoid diagram on the torus $\mathbb{T}^2$ formed from $L$ by adding a handle between $e_1$ and $e_2$ and running an arc along it, oriented from $e_1$ to $e_2$. If $e_1$ is a head and $e_2$ a tail endpoint, then the resulting arc is oriented. Otherwise it contains degree two vertices where the orientation is reversed, marking the locations of previous endpoints. If the endpoints $e_1,e_2$ lie in distinct regions, then it is easy to see that the graph on $\mathbb{T}^2$ obtained from $H_{e_1,e_2}(L)$ by seeing crossings as degree four vertices is cellularly embedded. This can be assumed without loss of generality using an $R2$ move; see Figure \ref{fig:R2_trick}. Using that $H_{e_1,e_2}(L)$ is cellularly embedded and hence obeys Euler's formula it is easy to deduce that $\Omega_g(H_{e_1,e_2}(L))= \Omega_g(L)+1$. As such it turns odd obstructions even and can be used along with the theta-closures to give a mock Alexander polynomial for odd-obstruction linkoids.

\begin{figure}[ht]
    \centering
    \includegraphics[width=0.35\linewidth]{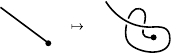}
    \caption{Using an $R2$ move to ensure two endpoints lie in distinct regions.}
    \label{fig:R2_trick}
\end{figure}

Although handle connections end up giving us generalized knotoids on the (punctured) torus rather than the plane or sphere, thereby taking us outside the scope of this paper, we mention them here briefly to show that in principle many constructions of generalized knotoids from linkoid diagrams are possible. We look forward to seeing further polynomial linkoid invariants derived via this scheme. \\

\textbf{Acknowledgements:} The authors would like to thank Ferenc Bencs and Pjotr Buys for helpful conversations, and the CIMPA-ICTP Research in Pairs Fellowship and CIRM for the support they provided during the writing of this paper.


\end{document}